\newtheorem{theorem}{Theorem}
\newtheorem{corollary}[theorem]{Corollary}
\newtheorem{lemma}[theorem]{Lemma}
\newtheorem{proposition}[theorem]{Proposition}
\newtheorem{remarks}[theorem]{Remarks}
\newenvironment{example}[1][Example] {\noindent\textbf{#1 } }{}
\newenvironment{proof}[1][\noindent Proof]{\textbf{#1} }{\hfill \ \rule{0.5em}{0.5em}} \newenvironment{remark}[1][\noindent
Remark]{\noindent\textbf{#1} }
\def\bin#1,#2{#1\choose#2}
\def\gra{\mathrm{gra}}
\def\qq{\mathbb{Q}}
\def\rr{\mathbb{R}}
\newcommand{\tos}{\rightrightarrows} 
\title{\bf Inverse maximum theorems and some consequences}
\author{John Cotrina$^{1,2}$\and Ra\'ul Fierro$^{1,2}$\thanks{Corresponding author: Instituto de Matem\'atica, Pontificia Universidad Cat\'olica de
Valpara\'{\i}so, Casilla 4059, Valpara\'{\i}so, Chile. Email: raul.fierro@pucv.cl; rafipra@gmail.com}\and \\[1ex]
{\footnotesize $^{1}$Instituto de Matem\'atica. Pontificia Universidad Cat\'{o}lica de Valpara\'{\i}so.}\\[-0.5ex] {\footnotesize $^{2}$Instituto de Matem\'aticas. Universidad de Valpara\'{\i}so.}
\and John Cotrina \thanks{Universidad del Pac\'ifico, Lima, Per\'u. Email: cotrina\_je@up.edu.pe}}
\author{John Cotrina$^{1}$\thanks{Email: cotrina\_je@up.edu.pe}\\
{\footnotesize $^{1}$Universidad del Pac\'ifico, Lima, Per\'u.}\\[1.5ex]
Ra\'ul Fierro$^{2,3}$\thanks{Email: raul.fierro@pucv.cl; raul.fierro@uv.cl}\\
{\footnotesize $^{2}$Instituto de Matem\'atica. Pontificia Universidad Cat\'{o}lica de Valpara\'{\i}so.}\\[-0.5ex] {\footnotesize $^{3}$Instituto de Matem\'aticas. Universidad de Valpara\'{\i}so.}
}
\begin{document}
\maketitle
\begin{abstract}
We deal with inverse maximum theorems, which  are inspired by the ones given by Aoyama, Komiya, Li \emph{et al.}, Park and Komiya,  and Yamauchi. As a consequence of our results, we state and prove an inverse maximum Nash theorem and show that any generalized Nash game can be reduced to a classical Nash game, under suitable assumptions. Additionally, we show that  a result by Arrow and Debreu, on the existence of solutions for generalized Nash games, is actually equivalent to the one given by Debreu-Fan-Glicksberg for classical Nash games, which in turn is equivalent to Kakutani-Fan-Glisckberg's fixed point theorem. 
\end{abstract}

\noindent{\textbf{Keywords:} Inverse maximum theorems; Berge's maximum theorem;  Generalized Nash game; Kakutani's fixed point theorem.}

\noindent{{\bf MSC (2020)}: 46N10; 91B50;  49J35

\section{Introduction}

In 1959, Berge provided conditions for the continuity of its maximizers' correspondence  with respect to its parameters. In 1997, Komiya \cite{Ko97} proposed the inverse problem, which consists of finding conditions on the correspondence for obtaining a continuous  function defining it as its maximizers' correspondence.
In that sense, Komiya gave a positive answer to the previous problem on finite-dimensional spaces and he showed the equivalence between the Kakutani theorem and an existence theorem for maximal elements by Yannelis and Prabhakar in \cite{YP83}. In 2001, Park and Komiya \cite{PK01} extended a certain class of correspondences defined from a topological space to a metric topological vector space  with convex balls. In 2003, Aoyama \cite{Ao03} worked on convex metric spaces and used the same class of correspondences. In 2008, Yamauchi  \cite{Ya08} also gave an affirmative answer to this inverse problem on locally convex topological vector spaces, but he deals with upper semicontinuous correspondences whose graphs are $G_\delta$ sets. Recently in 2022, Li \emph{et al. }  \cite{LiLiFeng2022}  presented a similar result to the one given by Yamauchi, but they considered the correspondence with a compact and convex range instead the assumption of the $G_\delta$ graph. Moreover, they proved that Kakutani-Fan-Glicksberg's fixed point theorem is a consequence of the generalized coincidence point theorem, which is a consequence of the equilibrium theorem for generalized Nash games. Inspired by these works we also present some inverse maximum theorems.

By making use of the finite-dimensional inverse maximum theorem by Komiya in \cite{Ko97}, in 2016, Yu \emph{et al.}  \cite{YuEtAl16} proved that the Kakutani and Brouwer fixed point theorems, among other important results, can be obtained from the Nash equilibrium theorem. Recently in 2021, Bueno and Cotrina \cite{BC-2021} used an inverse maximum result  to reformulate quasi-equilibrium problems and quasi-variational inequalities as generalized Nash games on Banach spaces. It is important to mention that the converse reformulation is known, see for instance \cite{AuEtAl21,BCC21,JCAHAS,CS21,JC-JZ-PS,Facchinei2007,HARKER199181} and the references therein. Motivated by these works, among the main applications of  our inverse maximum theorems, we state and prove an inverse maximum Nash theorem, which consists of finding appropriate payoff functions that give solutions  a predetermined strategy set. We think this formulation could be appropriate for economical agents having non-modifiable strategies due to some restrictions. We also reformulate generalized Nash games as classical Nash games, under suitable assumptions. Moreover, this reformulation allows us to show the equivalence between the Arrow and Debreu result in \cite{AD54} and the Debreu-Fan-Glicksberg theorem on locally convex spaces. We remark that some authors have generalized some equilibrium theorems by relaxing the continuity assumption of the payoff functions, see for instance \cite{AuEtAl21,Dasgupta,MORGAN2007,Reny,Tian95}. However, their existence results are in the finite-dimensional setting and are a consequence of the Kakutani-Fan-Glicksberg theorem, and, according to the equivalence proved in this work, they are equivalent.
 By taking advantage of the fact that the inverse maximum theorems, presented in the current work, are valid in the infinite-dimensional setting, we prove  Kakutani-Fan-Glicksberg's fixed point theorem \cite{Fa52,Gl52} and the Debreu-Glicksberg-Fan theorem on locally convex topological vector spaces, which generalizes the equivalences proved by Yu \emph{et al.}  \cite{YuEtAl16}.

We subdivided this work as follows. We introduce in Section \ref{Preliminaries} some definitions and facts.
In Section \ref{Results}, we  present our main results. Section \ref{Applications} is   devoted to generalized Nash games and Section \ref{FPT} is devoted to fixed point theory. Finally, we summarize the major results of this work in Section \ref{Conclusions}.

\section{Preliminaries}\label{Preliminaries}

A real-valued function $f:C\to\rr$ on a convex set $C$ in a vector space is said to be \emph{quasi-concave} if for each $\lambda\in\rr$ the set $\{x\in C:~f(x)\geq\lambda\}$ is convex. Clearly, the function $f$ is quasi-concave if, and only if, $f(tx+(1-t)y)\geq \min\{f(x),f(y)\}$ for all $x,y\in C$ and all $t\in[0,1]$.

Let $X$ and $Y$ be two non-empty sets and $\mathcal{P}(Y)$ be the family of all subsets of $Y$. A \emph{correspondence} or \emph{set-valued map} $T:X\tos Y$ is an application $T:X\to \mathcal{P}(Y)$, that is, for $u\in X$, $T(u)\subseteq Y$.
The \emph{graph} of $T$ is defined as
\[\gra(T)=\big\{(u,v)\in X\times Y\::\: v\in T(u)\big\}.\]

We now recall the notion of continuity for correspondences. Let $X$ and $Y$ be two topological spaces. A correspondence $T:X\tos Y$ is said to be:
\begin{itemize}
 \item \emph{closed}, when $\gra(T)$ is a closed subset of $X\times Y$;
 \item \emph{lower semicontinuous} if the set $\{x\in X\::\: T(x)\cap G\neq \emptyset\}$ is open, whenever $G$ is open;
 \item \emph{upper semicontinuous} if
the set $\{x\in X\::\: T(x)\cap F\neq \emptyset\}$ is closed, whenever $F$ is closed; and
 \item \emph{continuous} if it is both  lower and upper semicontinuous.
 \end{itemize}

It is straightforward to verify that a correspondence $T$ is lower semicontinuous if, and only if, for all $x\in X$ and any open set $G\subseteq Y$, with $T(x)\cap G\neq\emptyset$, there exists a neighborhood $V_x$ of $x$ such that $V_x$ such that $T(x')\cap G\neq\emptyset$ for all $x'\in V_x$. In a similar way, $T$ is upper semicontinuous if, and only if,  for all $x\in X$ and any open set $G$, with $T(x)\subseteq G$,  there exists a neighborhood $V_x$ of $x$  such that $T(V_x) \subseteq V$.


From now on, we will assume that any topological space is Hausdorff.
\section{Main results}\label{Results}

Let us consider a correspondence $K:X\tos Y$  and a function $\theta:\gra(K)\to\rr$, where $X$ and $Y$ are two topological spaces. We associate with them  the  argmax correspondence $M_0:X\tos Y$ defined as
\[
M_0(x)=\left\lbrace y\in K(x): \theta(x,y)=\sup_{z\in K(x)}\theta(x,z)\right\rbrace.
\]
The Berge maximum theorem can be stated as follows.
\begin{theorem}\label{t1}
If $K$ is  continuous with non-empty  compact values and $\theta$ is continuous. Then,   the argmax correspondence $M_0$, is upper semicontinuous and has non-empty compact values. Moreover, the function $m:X\to\rr$ defined as $m(x)=\sup_{y\in K(x)}\theta(x,y)$ is continuous.
\end{theorem}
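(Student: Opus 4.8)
The plan is to establish the three claims — non-emptiness and compactness of the values of $M_0$, upper semicontinuity of $M_0$, and continuity of $m$ — in a convenient order, deriving the semicontinuity of $m$ first since it feeds into the rest.

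First I would fix $x\in X$. Since $K(x)$ is non-empty and compact and $z\mapsto\theta(x,z)$ is continuous on $K(x)$, the supremum $m(x)=\sup_{z\in K(x)}\theta(x,z)$ is finite and attained, so $M_0(x)\neq\emptyset$. Moreover $M_0(x)=\{y\in K(x):\theta(x,y)\geq m(x)\}$ is the preimage of a closed set under the continuous map $y\mapsto\theta(x,y)$ intersected with the compact set $K(x)$, hence closed in $K(x)$ and therefore compact. This handles the values of $M_0$.

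Next I would prove that $m$ is continuous by showing it is both lower and upper semicontinuous. For lower semicontinuity, fix $x$ and $\varepsilon>0$; pick $y\in K(x)$ with $\theta(x,y)=m(x)$. By continuity of $\theta$ at $(x,y)$ there is a neighbourhood $U$ of $x$ and a neighbourhood $W$ of $y$ with $\theta(x',y')>m(x)-\varepsilon$ for $(x',y')\in (U\times W)\cap\gra(K)$; since $K$ is lower semicontinuous and $K(x)\cap W\neq\emptyset$, after shrinking $U$ we may assume $K(x')\cap W\neq\emptyset$ for all $x'\in U$, so $m(x')\geq\theta(x',y')>m(x)-\varepsilon$. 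For upper semicontinuity, fix $x$ and $\varepsilon>0$; for each $z\in K(x)$ continuity of $\theta$ gives neighbourhoods $U_z$ of $x$ and $W_z$ of $z$ with $\theta(x',z')<m(x)+\varepsilon$ on $(U_z\times W_z)\cap\gra(K)$. The sets $\{W_z\}_{z\in K(x)}$ cover the compact set $K(x)$; extract a finite subcover $W_{z_1},\dots,W_{z_n}$, set $W=\bigcup_i W_{z_i}$ and $U'=\bigcap_i U_{z_i}$. By upper semicontinuity of $K$ (using $K(x)\subseteq W$ open) there is a neighbourhood $U''$ of $x$ with $K(x')\subseteq W$ for $x'\in U''$; then for $x'\in U'\cap U''$ every $z'\in K(x')$ lies in some $W_{z_i}$, whence $\theta(x',z')<m(x)+\varepsilon$, so $m(x')\leq m(x)+\varepsilon$.

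Finally I would establish upper semicontinuity of $M_0$ using the criterion recalled in the Preliminaries: given $x$ and an open set $G\supseteq M_0(x)$, I must find a neighbourhood of $x$ on which $M_0$ stays inside $G$. Argue by contradiction assuming no such neighbourhood exists, so there is a net $x_\alpha\to x$ and points $y_\alpha\in M_0(x_\alpha)\setminus G$. Since $K$ is upper semicontinuous with compact values, the usual argument shows the net $(y_\alpha)$ is eventually inside a compact neighbourhood of $K(x)$ and hence has a subnet converging to some $y$; upper semicontinuity of $K$ forces $y\in K(x)$, and since the complement of $G$ is closed, $y\notin G$. But $\theta(x_\alpha,y_\alpha)=m(x_\alpha)$, and passing to the limit along the subnet using continuity of $\theta$ and continuity of $m$ (just proved) gives $\theta(x,y)=m(x)$, i.e. $y\in M_0(x)\subseteq G$, a contradiction. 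Compactness of the values then follows as noted, completing the proof. The main obstacle is the upper semicontinuity argument for $M_0$: one must be careful to extract a convergent subnet of the $y_\alpha$, which relies on the standard fact that an upper semicontinuous correspondence with compact values maps a suitable neighbourhood of $x$ into a set with compact closure — the place where compactness of $K(x)$ is genuinely used.
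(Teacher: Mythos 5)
The paper does not actually prove Theorem \ref{t1}: it is quoted as Berge's classical maximum theorem, so there is no in-paper argument to compare against. Your proposal is the standard textbook proof and its overall architecture (attainment and closedness of $M_0(x)$, then lower and upper semicontinuity of $m$ via lower semicontinuity of $K$ and via a finite subcover of $K(x)$ plus upper semicontinuity of $K$, then upper semicontinuity of $M_0$ by a net argument) is sound; the two semicontinuity arguments for $m$ are carried out correctly, taking proper care that $\theta$ is only defined and continuous on $\gra(K)$.

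The one step that does not survive scrutiny as written is the extraction of a convergent subnet of $(y_\alpha)$. You justify it by the ``standard fact'' that an upper semicontinuous compact-valued correspondence maps some neighbourhood of $x$ into a set with compact closure. In an arbitrary Hausdorff space this is false: take $Y$ a non-locally-compact space (e.g.\ an infinite-dimensional normed space) and $K$ the identity viewed as a singleton-valued correspondence; it is continuous with compact values, yet no neighbourhood of $x$ has relatively compact image. Fortunately the conclusion you need is still true and has a direct proof that uses only compactness of $K(x)$ and upper semicontinuity of $K$: if no point of $K(x)$ were a cluster point of $(y_\alpha)$, each $y\in K(x)$ would have an open neighbourhood $W_y$ that the net eventually avoids; covering $K(x)$ by finitely many $W_{y_1},\dots,W_{y_n}$ and setting $W=\bigcup_i W_{y_i}$, upper semicontinuity gives a neighbourhood $U$ of $x$ with $K(U)\subseteq W$, so eventually $y_\alpha\in W$ while simultaneously $y_\alpha\notin W$ for $\alpha$ beyond a common index --- a contradiction. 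Hence $(y_\alpha)$ has a subnet converging to some $y\in K(x)$ (the limit lies in $K(x)$ by construction, without any separate appeal to closedness of the graph), and the rest of your contradiction argument goes through verbatim. With that substitution the proof is complete and correct.
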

Now, if $K$ is  continuous  and $M:X\tos Y$ is a correspondence such that $\gra(M)\subseteq\gra(K)$, then does there exists a  function $\theta:\gra(K)\to\rr$ such that $M$ is the argmax correspondence associated to $K$ and $\theta$? A first answer to this question was given by  Komiya in \cite{Ko97}, in the linear and finite dimensional setting. Inspired from this, but without considering convexity properties of $\theta$, we give a positive answer on topological spaces.

\begin{theorem}\label{t5}  Let $X$ and $Y$ be two  topological spaces, $K:X\tos Y$ be a continuous correspondence with normal graph, compact and non-empty values, and   $M:X\tos Y$ be  a correspondence with  non-empty values such that $M(x)\subseteq K(x)$, for all $x\in X$.
Then, the following two conditions are equivalent:
\begin{itemize}
\item [(a)] there exists a continuous function $\theta:\gra(K)\to[0,1]$, such that
$$
\gra(M)=\left\{(x,y)\in\gra(K):\theta(x,y)=\sup_{z\in K(x)}\theta(x,z)\right\},
$$
and
\item [(b)] $\gra(M)$ is a closed and $G_\delta$ set.
\end{itemize}
\end{theorem}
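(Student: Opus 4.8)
The plan is to prove the two implications by rather different means. The implication (a)$\Rightarrow$(b) should fall out of Berge's maximum theorem (Theorem~\ref{t1}), while (b)$\Rightarrow$(a) --- the substantive direction --- should reduce to the classical fact (Vedenissoff's theorem) that in a normal space the closed $G_\delta$ subsets are exactly the \emph{zero sets} of continuous $[0,1]$-valued functions. This also explains the asymmetry in the hypotheses: continuity and compact-valuedness of $K$ are used only for (a)$\Rightarrow$(b) (to invoke Theorem~\ref{t1} and to know that $\gra(K)$ is closed in $X\times Y$), whereas normality of $\gra(K)$ is used only for (b)$\Rightarrow$(a).

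For (a)$\Rightarrow$(b), I would start from a continuous $\theta:\gra(K)\to[0,1]$ with $\gra(M)=\{(x,y)\in\gra(K):\theta(x,y)=m(x)\}$, where $m(x)=\sup_{z\in K(x)}\theta(x,z)$. Theorem~\ref{t1} makes $m$ continuous on $X$, so $(x,y)\mapsto\theta(x,y)-m(x)$ is continuous on $\gra(K)$ and $\le 0$ everywhere; hence $\gra(M)$ is its zero set, so closed in $\gra(K)$, and $\gra(M)=\bigcap_{n\ge1}\{(x,y)\in\gra(K):\theta(x,y)>m(x)-1/n\}$ exhibits it as a $G_\delta$ in $\gra(K)$. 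Finally, since $K$ is upper semicontinuous with compact values and $Y$ is Hausdorff, $\gra(K)$ is closed in $X\times Y$, so $\gra(M)$ is closed there as well.

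For (b)$\Rightarrow$(a), I would write $\gra(M)=\bigcap_{n\ge1}U_n$ with each $U_n$ open in the normal space $\gra(K)$, use Urysohn's lemma to pick continuous $\phi_n:\gra(K)\to[0,1]$ with $\phi_n\equiv 0$ on $\gra(M)$ and $\phi_n\equiv 1$ on $\gra(K)\setminus U_n$, and set $\theta:=1-\sum_{n\ge1}2^{-n}\phi_n$. This series converges uniformly, so $\theta:\gra(K)\to[0,1]$ is continuous, and one checks $\theta(x,y)=1$ iff $(x,y)\in\gra(M)$ (all summands are nonnegative and $\phi_n=1$ off $U_n$). Since $M(x)\ne\emptyset$ for each $x$, there is $y_0\in M(x)$ with $\theta(x,y_0)=1$, so $\sup_{z\in K(x)}\theta(x,z)=1$ and the supremum over $K(x)$ is attained exactly at the points of $M(x)$; this is precisely the description of $\gra(M)$ required in (a).

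The part I expect to be the real content is the identification of ``closed $G_\delta$ in $\gra(K)$'' with ``zero set of a continuous function on the normal space $\gra(K)$'': once this is available, the target function $\theta$ is produced with no appeal to any linear or convex structure, and with no hypotheses on $Y$ beyond those inherited from $\gra(K)$. The only points needing care are bookkeeping ones --- fixing the ambient space for ``$G_\delta$'' and ``closed'' (namely $\gra(K)$, with closedness then transferring to $X\times Y$ since $\gra(K)$ is closed there), and verifying the elementary equivalence $\theta(x,y)=1\iff(x,y)\in\gra(M)$ used in both directions.
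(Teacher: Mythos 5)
Your proposal is correct and follows essentially the same route as the paper: (a)$\Rightarrow$(b) via Berge's maximum theorem and an explicit $G_\delta$ representation, and (b)$\Rightarrow$(a) via Urysohn functions summed into $\theta=\sum_n 2^{-n}\theta_n$ (your $1-\sum_n 2^{-n}\phi_n$ is literally the same function), using non-emptiness of $M(x)$ to identify $\gra(M)$ with the argmax set. Your variant $\{\theta>m-1/n\}$ of the $G_\delta$ representation even sidesteps a harmless edge case (fibers with $m(x)=0$) in the paper's $\{\theta>(1-1/n)m\}$, but this is a cosmetic difference, not a different proof.
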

\begin{proof}
Let $m:X\to\rr$ be the function defined by $m(x)=\sup_{z\in K(x)}\theta(x,z)$ and suppose condition (a) holds.
Theorem \ref{t1} implies that $m$ is continuous, and consequently, $M$ is closed. Moreover,
\[
\gra(M)=\bigcap_{n=1}^{\infty}\{(x,y)\in \gra(K):~ \theta(x,y)>(1-1/n)m(x)\},
\]
which proves that $\gra(M)$ is a $G_\delta$ set and hence
 condition (b) holds.

Next, suppose there exists a non-increasing sequence of open subsets of $X\times Y$, $\{U_n\}_{n\in\mathbb{N}}$, such that $\gra(M)=\bigcap_{n\in\mathbb{N}}U_n\cap \gra(K)$. Since $\gra(K)$ is normal, for each $n\in\mathbb{N}$, there exists a Urysohn function $\theta_{n}:\gra(K)\to[0,1]$ such that $\theta_{n}\equiv 1$ on $\gra(M)$ and $\theta_{n}\equiv 0$ on $\gra(K)\setminus U_n$. Let $\theta:\gra(K)\to[0,1]$ be defined as
$$
\theta(x,y)=\sum_{n=1}^\infty\frac{1}{2^n}\theta_{n}(x,y).
$$
We have, $\theta$ is continuous. Moreover, $\theta^{-1}(\{1\})=\gra(M)$ and, since $M$ is non-empty valued, it follows that
$$
\gra(M)=\left\{(x,y)\in\gra(K):\theta(x,y)=\sup_{z\in K(x)}\theta(x,z)\right\}.
$$
Thus, the proof is complete.
\end{proof}

In order to guarantee the normality  of $\gra(K)$, we can assume for instance that $X\times Y$ is a normal space and $\gra(K)$ is closed. However, this condition is  not necessary, as we can see if $X$ is a normal space, $Y$ is a non-empty non-normal space and $K:X\tos Y$ is defined by $K(x)=\{y_0\}$, for all $x\in X$. Clearly, $X\times Y$ is not normal  but $\gra(K)$ is.

\begin{remark}\rm
According to Theorem \ref{t1}, the correspondence $M$, in Theorem \ref{t5}, is upper semicontinuous  with non-empty and compact values, whenever  the two equivalent conditions hold.

On the other hand, the function $\theta$ given en Theorem \ref{t5} is not unique. It is enough to see that for any strictly increasing function $h:\rr\to\rr$, the function $\vartheta=h\circ\theta$ satisfies Theorem \ref{t5}.
\end{remark}

The following result is an inverse maximum theorem and it is also a generalization of Lemma 4.1 in \cite{BC-2021}.
\begin{proposition}\label{t7}
Let $X$ and $Y$ be two topological spaces such that $X\times Y$ is normal and  $M:X\tos Y$ be  a correspondence with  non-empty and compact values. If $M$ is upper semicontinuous and its graph is a $G_\delta$ set, then
there exists a continuous function $\theta:X\times Y\to[0,1]$ such that
\[
\gra(M)=\left\{(x,y)\in X\times Y:\theta(x,y)=\sup_{z\in Y}\theta(x,z)\right\}.
\]
\end{proposition}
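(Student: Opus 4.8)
The plan is to derive Proposition \ref{t7} from Theorem \ref{t5} by choosing the ``right'' ambient correspondence $K$. The natural candidate is the constant correspondence $K:X\tos Y$ given by $K(x)=Y$ for every $x\in X$, so that $\gra(K)=X\times Y$. First I would check that this $K$ satisfies all the hypotheses of Theorem \ref{t5}: its graph $X\times Y$ is normal by assumption, so it has normal graph; it has non-empty values trivially. The two remaining requirements are that $K$ be continuous and that it have compact values. Upper and lower semicontinuity of a constant correspondence are immediate from the definitions (for any open $G$, the sets $\{x:Y\cap G\neq\emptyset\}$ and $\{x:Y\subseteq G\}$ are either $\emptyset$ or all of $X$, hence both open and closed). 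The genuine issue is that $K(x)=Y$ need not be compact, which is needed to invoke Theorem \ref{t1} inside the proof of Theorem \ref{t5}.

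I expect this compactness gap to be the main obstacle, and the way around it is to observe that $M$ itself has compact values and is upper semicontinuous, so we should instead take $K(x)=\overline{\bigcup_{x'\in X} M(x')}$ or, more carefully, restrict to a compact set containing the range of $M$ --- but since $Y$ need not be locally compact, even this may fail. A cleaner route: replace the target space. Because $M$ is usc with compact values and $\gra(M)$ is $G_\delta$ in the normal space $X\times Y$, apply Theorem \ref{t5} with $K=M$ itself: $K=M$ is usc with compact non-empty values, $\gra(M)$ is closed (an usc correspondence with closed — here compact — values into a Hausdorff space has closed graph) and $G_\delta$ by hypothesis, and $\gra(K)=\gra(M)$ inherits normality as a closed subspace of the normal space $X\times Y$. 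But then Theorem \ref{t5}(a) only produces $\theta$ on $\gra(M)$ with $M$ as its own argmax, which is vacuous. So the honest approach is the constant-$K$ one together with an extension argument.

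Concretely, here is the route I would carry out. Take $\{U_n\}$ a non-increasing sequence of open subsets of $X\times Y$ with $\gra(M)=\bigcap_n U_n$ (we may assume the intersection is exactly $\gra(M)$ since $\gra(M)$ is closed, by intersecting with the open complement-neighborhoods; a closed $G_\delta$ is such an intersection). Since $X\times Y$ is normal and $\gra(M)$ is closed, Urysohn's lemma yields continuous $\theta_n:X\times Y\to[0,1]$ with $\theta_n\equiv 1$ on $\gra(M)$ and $\theta_n\equiv 0$ off $U_n$, and set $\theta=\sum_{n\geq 1}2^{-n}\theta_n$, which is continuous with $\theta^{-1}(\{1\})=\gra(M)$ and $0\le\theta\le 1$. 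It remains to show $\gra(M)=\{(x,y):\theta(x,y)=\sup_{z\in Y}\theta(x,z)\}$. The inclusion $\subseteq$ is clear because on $\gra(M)$ we have $\theta=1$, the maximum possible value. For $\supseteq$, suppose $\theta(x,y)=\sup_{z\in Y}\theta(x,z)$; I must show $M(x)\neq\emptyset$ forces this supremum to be $1$ and attained only on $M(x)$. Here is where upper semicontinuity and compactness of $M(x)$ enter: pick any $y_0\in M(x)$, so $\theta(x,y_0)=1$, hence the supremum over $z$ equals $1$, so $\theta(x,y)=1$, i.e. $(x,y)\in\theta^{-1}(\{1\})=\gra(M)$, giving $y\in M(x)$ as desired. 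Thus the usc/compact hypotheses are actually only used to guarantee $\gra(M)$ is closed (so that Urysohn applies and the level set $\theta^{-1}(\{1\})$ is exactly $\gra(M)$); the argument then goes through verbatim, and in fact this shows Proposition \ref{t7} is a direct specialization of the ``(b) $\Rightarrow$ (a)'' half of Theorem \ref{t5} with $K\equiv Y$, the compactness of $K(x)$ never being needed in that direction since Theorem \ref{t1} is only invoked for ``(a) $\Rightarrow$ (b)''. So the cleanest write-up simply says: apply the construction in the proof of Theorem \ref{t5} to the constant correspondence $K(x)=Y$, noting $\gra(K)=X\times Y$ is normal, and verify the final displayed identity using $M\neq\emptyset$ exactly as there.
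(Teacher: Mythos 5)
Your proof is correct and follows essentially the same route as the paper: the paper's own proof simply observes that an upper semicontinuous correspondence with compact values has closed graph and then repeats the Urysohn-function construction from the (b) $\Rightarrow$ (a) half of Theorem \ref{t5} with $\gra(K)$ replaced by $X\times Y$, which is exactly your argument. Your additional observation that the compactness of $K(x)$ is never used in that direction (only in (a) $\Rightarrow$ (b), via Theorem \ref{t1}) is accurate and correctly resolves the only apparent obstacle to specializing to $K\equiv Y$.
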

\begin{proof}
From the fact that any upper semicontinuous correspondence with compact values is closed, this proof follows by the same steps of the proof of (b) implies (a), in Theorem \ref{t5}.
\end{proof}

The following result gives sufficient conditions in order to guarantee the inverse of Berge's maximum theorem in the context of topological spaces without normality assumption.

\begin{theorem}\label{P-3}
Let $X$ and $Y$ be two topological spaces, and $K,M:X\tos Y$ be two correspondences such that $M(x)$ is non-empty  and $M(x)\subseteq K(x)$, for all $x\in X$. Suppose there exists a family of open sets in $\gra(K)$, $\{U_t\}_{t>0}$, such that
\begin{itemize}
\item[(i)] $\bigcup_{t>0}U_t=\gra(K)$,
\item[(ii)] $\overline{U}_s\subseteq U_t$, for all $s<t$, and
\item[(iii)] $\gra(M)=\bigcap_{t>0} U_t$.
\end{itemize}
Then, there exists a continuous function $\theta:\gra(K)\to[0,1]$ such that
\[
\gra(M)=\left\{(x,y)\in \gra(K):\theta(x,y)=\sup_{z\in K(x)}\theta(x,z)\right\}.
\]
\end{theorem}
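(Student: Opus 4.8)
The plan is to mimic the ``(b) implies (a)'' argument of Theorem~\ref{t5}, but using the Urysohn-type data supplied directly by the family $\{U_t\}_{t>0}$ rather than invoking normality of $\gra(K)$. Indeed, the family $\{U_t\}_{t>0}$, being increasing (by (ii)) with $\overline{U}_s\subseteq U_t$ for $s<t$, is precisely the kind of ``scale'' that appears in the proof of Urysohn's lemma; it should allow us to manufacture a continuous function without ever asserting that $\gra(K)$ is normal.

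First I would define $\theta:\gra(K)\to[0,1]$ by
\[
\theta(x,y)=\sup\{\,t\in(0,1]: (x,y)\in U_t\,\}\quad\text{(with }\sup\emptyset:=0\text{),}
\]
after first relabelling so that we only use the index range $t\in(0,1]$; since $\bigcup_{t>0}U_t=\gra(K)$ by (i), we may replace $U_t$ for $t>1$ by $U_1$ and restrict attention to $t\le 1$, so that $\bigcup_{0<t\le 1}U_t=\gra(K)$ still holds and $\theta$ is well defined and $[0,1]$-valued. The next step is to check continuity of $\theta$: this is the standard Urysohn computation. For $a\in[0,1)$ one has $\{\theta>a\}=\bigcup_{t>a}U_t$, which is open in $\gra(K)$; for $a\in(0,1]$ one has $\{\theta<a\}=\bigcup_{t<a}(\gra(K)\setminus\overline{U}_t)$, using condition (ii) to see that $(x,y)\notin\overline{U}_t$ forces $\theta(x,y)\le t<a$, while conversely $\theta(x,y)<a$ gives some $s<a$ with $(x,y)\notin U_s$, hence $(x,y)\notin\overline{U}_{s'}$ for $s'<s$. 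Both preimages being open, $\theta$ is continuous.

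Then I would identify the super-level set. By (iii), $(x,y)\in\gra(M)$ iff $(x,y)\in U_t$ for every $t\in(0,1]$, which by the definition of $\theta$ is exactly $\theta(x,y)=1$. So $\gra(M)=\theta^{-1}(\{1\})$. To conclude, fix $x\in X$. Since $M(x)\ne\emptyset$, pick $y_0\in M(x)\subseteq K(x)$; then $\theta(x,y_0)=1$, so $\sup_{z\in K(x)}\theta(x,z)=1$. Hence for $y\in K(x)$ we have $\theta(x,y)=\sup_{z\in K(x)}\theta(x,z)$ iff $\theta(x,y)=1$ iff $(x,y)\in\gra(M)$. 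This gives
\[
\gra(M)=\left\{(x,y)\in\gra(K):\theta(x,y)=\sup_{z\in K(x)}\theta(x,z)\right\},
\]
as required.

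The main obstacle I anticipate is purely bookkeeping: making sure the rescaling of the index set to $(0,1]$ is done cleanly so that properties (i)--(iii) are preserved, and being careful with the boundary cases $a=0$ and $a=1$ when verifying continuity (the sets $\{\theta\ge 0\}=\gra(K)$ and $\{\theta\le 1\}=\gra(K)$ are trivially closed, so only $\{\theta>a\}$ for $a<1$ and $\{\theta<a\}$ for $a>0$ need genuine argument). There is no deep difficulty here; the content is that conditions (i)--(iii) hand us exactly the ``separating scale'' that normality would otherwise be invoked to produce, so the normality hypothesis of Theorem~\ref{t5} can be dropped.
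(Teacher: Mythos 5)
Your overall route is the paper's own: extract a continuous ``Urysohn scale'' function directly from the family $\{U_t\}_{t>0}$ (the paper quotes Lemma~3, Chapter~4 of Kelley for exactly this; your level-set computation is in effect a proof of that lemma), identify $\gra(M)$ as the top level set, and finish using $M(x)\neq\emptyset$. However, the formula you give for $\theta$ is wrong, and the argument breaks at the step ``$\theta(x,y)=1$ iff $(x,y)\in U_t$ for every $t$''. By (ii) the family is \emph{increasing} in $t$, so for each $(x,y)$ the set $\{t\in(0,1]:(x,y)\in U_t\}$ is an up-interval: if $(x,y)\in U_{t_0}$ for a single $t_0\le 1$, then $(x,y)\in U_t$ for all $t\in[t_0,1]$ and your supremum already equals $1$. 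Hence $\theta\equiv 1$ on $\bigcup_{0<t\le 1}U_t$, so $\theta^{-1}(\{1\})$ is (after your relabelling) all of $\gra(K)$ rather than $\gra(M)$, and the claimed equivalence fails in one direction. The displayed identities $\{\theta>a\}=\bigcup_{t>a}U_t$ and $\{\theta<a\}=\bigcup_{t<a}\bigl(\gra(K)\setminus\overline{U}_t\bigr)$ are likewise not the level sets of the function you defined; they are the level sets of the \emph{infimum}-based function.

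The quantity that detects $\gra(M)=\bigcap_{t>0}U_t$ is $\tau(x,y)=\inf\{t>0:(x,y)\in U_t\}$, which vanishes exactly on $\gra(M)$ and is strictly positive elsewhere; one then flips it, setting $\theta=1-\tau\wedge 1$, to turn the minimum into a maximum. This is precisely what the paper does, and with this one-line repair your continuity computation (read for $\tau$, with $<$ and $>$ interchanged) and your final argmax step using $M(x)\neq\emptyset$ go through verbatim. A smaller bookkeeping point: replacing $U_t$ for $t>1$ by $U_1$ does \emph{not} preserve $\bigcup_{0<t\le 1}U_t=\gra(K)$ in general; you should instead set $U_t:=\gra(K)$ for $t\ge 1$, or simply not truncate at all, since $\tau\wedge 1$ already ignores the indices above $1$.
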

\begin{proof}
Thanks to conditions (i) and (ii), by Lemma 3, Chapter 4 in \cite{Ke55}, the function $\tau:X\times Y\to\rr$ defined as
\[
\tau(x,y)=\inf\{t>0:~(x,y)\in  U_t\}
\]
is continuous. Consequently, the function $\theta$ defined as
\[
\theta(x,y)=1-\tau(x,y)\wedge1
\]
is also continuous. Moreover, we can see that $\theta(x,y)=1$ if, and only if, $\tau(x,y)=0$, which in turn is equivalent to $(x,y)\in U_t$, for all $t>0$. This allows us to conclude that $\theta(x,y)=1$ if, and only if, $(x,y)\in M$.
Finally, the result follows from the fact that $M$ is non-empty valued.
\end{proof}

\begin{remarks}\rm
A few remarks are needed.
\begin{enumerate}
\item Notice that in above result we do not require $M$ to be compact-valued.
\item
Conditions (ii) and (iii), in the previous result, imply that $\gra(M)$ is a $G_\delta$ set.
 Indeed, for each $t>0$, let $q(t)\in\qq$ such that $0<q(t)<t$. Hence, $\gra(M)\subseteq U_{q(t)}\subseteq U_t$ and accordingly
\[
\gra(M)\subseteq\bigcap_{q\in\qq\cap(0,\infty)} U_{q}\subseteq\bigcap_{t>0} U_{q(t)}\subseteq \bigcap_{t>0} U_t.
\]
Therefore, $\gra(M)=\bigcap_{q\in\qq\cap(0,\infty)} U_{q}$ is a $G_\delta$ set.

\item Theorem \ref{P-3} fails to be true if  the correspondence $M$  has empty values. Indeed,
consider $M:\rr\tos\rr$ such that its graph is $\{(0,0)\}$ and the family of sets $\{B(0,t)\}_{t>0}$, where $B(0,t)$ is the open ball center at $0$ and radius $t$. It is clear that this family of sets satisfies the assumptions of Theorem \ref{P-3}. The function $\theta$ given in the proof is continuous, but the conclusion does not hold, because  $M(1)=\emptyset$ and
$\{y\in\rr:~ \theta(1,y)=\max_{z\in \rr}\theta(1,z)\}=\rr$.
\end{enumerate}
\end{remarks}

As an important consequence of the previous result, we have the following corollary.
\begin{corollary}\label{closed-function}
Let $(X,d_X)$ and $(Y,d_Y)$ be two metric spaces, and $K,M:X\tos Y$ be two correspondences such that $M(x)$ is non-empty and $M(x)\subseteq K(x)$, for all $x\in X$. Suppose $\gra(M)$ is closed in $\gra(K)$ with the metric $d:\gra(K)\to\rr$ defined as $d((x,y),(u,v))=d_X(x,u)+d_Y(y,v)$.  Then, there exists a continuous function $\theta:\gra(K)\to[0,1]$ such that, for all $x\in X$,
\[
M(x)=\{y\in Y:~\theta(x,y)=\sup_{z\in Y}\theta(x,z)\}.
\]
Moreover, for each $x\in X$ and $t>0$, we have
\[
U_t(x)=\bigcup_{x'\in B(x,t)}R_{t-d_X(x,x')}(x'),
\]
where $U_t:X\tos Y$ is the correspondence with $\gra(U_t)=\{(x,y)\in X\times Y:~d((x,y),\gra(M))<t\}$, $B(a,r)=\{z\in X:~d_X(a,z)<r\}$,  and $R_s(z)=\{y\in Y:~d_Y(y,M(z))<s\}$, for all $a,z\in X$ and $r,s>0$.
\end{corollary}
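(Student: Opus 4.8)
The plan is to obtain $\theta$ by applying Theorem~\ref{P-3} to the family of open metric neighborhoods of $\gra(M)$. First I would extend $d$ to all of $X\times Y$ by the same formula $d((x,y),(u,v))=d_X(x,u)+d_Y(y,v)$; this is a metric on $X\times Y$ inducing the product topology, and for any non-empty $S\subseteq X\times Y$ the map $(x,y)\mapsto d((x,y),S)=\inf_{(u,v)\in S}d((x,y),(u,v))$ is $1$-Lipschitz, hence continuous. For $t>0$ let $U_t$ be as in the statement, i.e.\ $\gra(U_t)=\{(x,y)\in X\times Y:~d((x,y),\gra(M))<t\}$, and put $W_t=\gra(U_t)\cap\gra(K)$, an open subset of $\gra(K)$. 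I will feed $\{W_t\}_{t>0}$ into Theorem~\ref{P-3}.

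Next I would check conditions (i)--(iii) of Theorem~\ref{P-3} for $\{W_t\}_{t>0}$ relative to $\gra(K)$. Condition~(i) holds because $\gra(M)\neq\emptyset$ makes $d(\cdot,\gra(M))$ finite, so every point of $\gra(K)$ lies in some $W_t$. For~(ii), continuity of $d(\cdot,\gra(M))$ shows that $\{(x,y)\in\gra(K):~d((x,y),\gra(M))\leq s\}$ is closed in $\gra(K)$; it contains $W_s$, hence $\overline{W}_s$, and it is contained in $W_t$ whenever $s<t$. For~(iii), $\bigcap_{t>0}W_t=\{(x,y)\in\gra(K):~d((x,y),\gra(M))=0\}$ equals the closure of $\gra(M)$ in the metric space $(\gra(K),d)$, which is $\gra(M)$ by hypothesis. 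Theorem~\ref{P-3} then provides a continuous $\theta:\gra(K)\to[0,1]$ with $\gra(M)=\{(x,y)\in\gra(K):\theta(x,y)=\sup_{z\in K(x)}\theta(x,z)\}$. Since, as in the proof of Theorem~\ref{P-3}, $\theta\equiv1$ on $\gra(M)$ and each $M(x)\subseteq K(x)$ is non-empty, we get $\sup_{z\in K(x)}\theta(x,z)=1$ for every $x$; reading the graph identity sectionwise then yields the displayed formula for $M(x)$, the supremum over $Y$ being understood as the supremum over the domain $K(x)$ of $\theta(x,\cdot)$.

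For the ``Moreover'' part I would compute the sections of $U_t$ directly. Writing $\gra(M)=\bigcup_{x'\in X}(\{x'\}\times M(x'))$ and using that each $M(x')$ is non-empty,
\[
d((x,y),\gra(M))=\inf_{x'\in X}\big(d_X(x,x')+d_Y(y,M(x'))\big).
\]
Hence $y\in U_t(x)$ if and only if $d_X(x,x')+d_Y(y,M(x'))<t$ for some $x'\in X$; any such $x'$ automatically lies in $B(x,t)$ and satisfies $y\in R_{t-d_X(x,x')}(x')$, and conversely any $x'\in B(x,t)$ with $y\in R_{t-d_X(x,x')}(x')$ works. Therefore $U_t(x)=\bigcup_{x'\in B(x,t)}R_{t-d_X(x,x')}(x')$.

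I do not expect a genuine obstacle: the statement is essentially the metric specialization of Theorem~\ref{P-3}. The two points that need attention are (a) that in condition~(ii) the closure must be taken in the subspace $(\gra(K),d)$, so one genuinely uses that $\gra(M)$ is closed in $\gra(K)$ rather than only in $X\times Y$, and (b) the small bookkeeping needed to pass from the graph-level conclusion of Theorem~\ref{P-3} to the sectionwise statement, which requires being explicit about the meaning of $\sup_{z\in Y}\theta(x,z)$ when $\theta$ is defined only on $\gra(K)$.
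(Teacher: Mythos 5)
Your proof is correct and follows essentially the same route as the paper: apply Theorem~\ref{P-3} to the family of metric $t$-neighborhoods of $\gra(M)$ (restricted to $\gra(K)$) and verify the ``Moreover'' identity by unwinding the definition of the $\ell^1$-type distance. Your explicit treatment of the two subtleties --- taking closures and the intersection $\bigcap_{t>0}W_t$ inside the subspace $(\gra(K),d)$, and reading $\sup_{z\in Y}$ as $\sup_{z\in K(x)}$ --- merely fills in details the paper leaves as ``clear.''
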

\begin{proof}
It is clear that the family of open sets, $\{U_t\}_{t>0}$, satisfies  all assumptions of Theorem \ref{P-3}. Hence, the existence of function $\theta$ satisfying the above conditions follows. The last part holds by noticing that  $y$ is an element of $U_t(x)$ if, and only if, there exists $(x_0,y_0)\in\gra(M)$ such that $d((x,y),(x_0,y_0))<t$, which is equivalent that $d_X(x,x_0)<t$ and $d_Y(y,y_0)<t-d_X(x,x_0)$.
\end{proof}

The conclusion of Theorem \ref{P-3} can be improved when the range space of the correspondence is a vector space.

\begin{theorem}\label{P-4}
Let $X$ and $Y$ be two  topological spaces, with $Y$ a vector space;    $K,M:X\tos Y$ be two correspondences such that $M(x)$ is non-empty  and $M(x)\subseteq K(x)$, for all $x\in X$.
Suppose there exists an increasing family, $\{U_t\}_{t\geq0}$, of open sets in $\gra(K)$ such that

\begin{itemize}
\item[(i)] $\bigcup_{t>0}U_t=\gra(K)$,
\item[(ii)] $\overline{U}_s\subseteq U_t$, for all $s<t$,
\item[(iii)] $\gra(M)=\bigcap_{t>0} U_t$, and
\item [(iv)] $\{y\in Y: (x,y)\in U_t\}$ is convex, for all $t>0$ and $x\in X$.
\end{itemize}
Then, there exists a continuous function $\theta:\gra(K)\to[0,1]$ such that the following two conditions hold:
\begin{itemize}
  \item [(v)] $\gra(M)=\{(x,y)\in \gra(K):~\theta(x,y)=\sup_{z\in K(x)}\theta(x,z)\}$, and
  \item [(vi)] $\theta(x,\cdot)$ is quasi-concave, for all $x\in X$.
\end{itemize}
\end{theorem}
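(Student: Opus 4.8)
The plan is to run the construction from the proof of Theorem~\ref{P-3} unchanged and then read off the quasi-concavity in~(vi) directly from hypothesis~(iv). Indeed, conditions (i)--(iii) here are exactly conditions (i)--(iii) of Theorem~\ref{P-3}, so I would first invoke Lemma~3, Chapter~4 in \cite{Ke55} to get that $\tau:\gra(K)\to\rr$ defined by $\tau(x,y)=\inf\{t>0:(x,y)\in U_t\}$ is continuous, hence $\theta:=1-\tau\wedge1$ is a continuous map into $[0,1]$; arguing verbatim as in Theorem~\ref{P-3}, $\theta(x,y)=1$ if and only if $(x,y)\in\gra(M)$, and since $M$ is non-empty valued this gives~(v). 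So everything reduces to establishing~(vi).

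Before that, I would record that for each $x\in X$ the set $K(x)$ is convex, so that quasi-concavity of $\theta(x,\cdot)$ is well posed: by (i) and the monotonicity of $t\mapsto U_t$ forced by (ii), the slice $K(x)=\bigcup_{t>0}\{y\in Y:(x,y)\in U_t\}$ is an increasing union of the convex sets furnished by~(iv), hence convex.

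Finally, I would fix $x\in X$ and $\lambda\in\rr$ and inspect the superlevel set $L_\lambda=\{y\in K(x):\theta(x,y)\ge\lambda\}$. For $\lambda\le0$ one has $L_\lambda=K(x)$ and for $\lambda>1$ one has $L_\lambda=\emptyset$, both convex. For $0<\lambda\le1$, the identity $\theta=1-\tau\wedge1$ gives $L_\lambda=\{y\in K(x):\tau(x,y)\le1-\lambda\}$, and, since $\{U_t\}$ is increasing, $\tau(x,y)\le1-\lambda$ holds precisely when $(x,y)\in U_t$ for every $t>1-\lambda$; therefore $L_\lambda=\bigcap_{t>1-\lambda}\{y\in Y:(x,y)\in U_t\}$ is an intersection of the convex sets in~(iv), hence convex. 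Thus $\theta(x,\cdot)$ is quasi-concave, which is~(vi), and the proof is complete.

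I do not expect a genuine obstacle. The only points that need care are noting that $K(x)$ is convex (so that~(vi) even makes sense) and the bookkeeping that identifies the superlevel sets of $\theta(x,\cdot)$ with intersections of the convex slices of the $U_t$: one must keep track of the truncation $\wedge1$ in the definition of $\theta$ and use the monotonicity of the family $\{U_t\}$ to turn the inequality $\tau(x,y)\le s$ into the membership $(x,y)\in\bigcap_{t>s}U_t$.
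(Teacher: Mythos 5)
Your proposal is correct and follows essentially the same route as the paper: both obtain (v) from the Theorem~\ref{P-3} construction and then identify the superlevel sets $\{y:\theta(x,y)\ge s\}$ with $\bigcap_{t>1-s}\{y:(x,y)\in U_t\}$, an intersection of the convex slices from (iv). The only differences are cosmetic: the paper cites Lemma~2, Chapter~4 of Kelley for that level-set identity where you derive it by hand from the definition of $\tau$ and the monotonicity of $\{U_t\}$, and you add the (worthwhile) observation that $K(x)$ is itself convex so that quasi-concavity on $K(x)$ is well posed.
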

\begin{proof}
Thanks to Theorem \ref{P-3}, condition (v) holds  and, by applying Lemma 2, Chapter 4 in \cite{Ke55}, for all $s\in (0,1]$, we deduce
\[
\{(x,y)\in \gra(K):~\theta(x,y)\geq s\}= \bigcap_{t>1-s}U_t.
\]
Hence, for each $x\in X$  we have
\[
\{y\in Y:~ \theta(x,y)\geq s\}=\left\lbrace \begin{array}{cc}
\bigcap_{t>1-s}\{y\in Y: (x,y)\in U_{t}\},&0<s\leq 1,\\
Y,& s\leq 0,\\
\emptyset,&s>1.
\end{array}\right.
\]
Therefore, condition (vi) holds and the proof is complete.
\end{proof}

The following example shows that Theorem \ref{closed-function} is neither a consequence of Theorem 3.5 in \cite{LiLiFeng2022} by Li \emph{et al.} nor Theorem 1.3 in \cite{Ya08}  by Yamauchi.

\begin{example}
We consider the correspondence $M:\rr\to\rr$ defined by
\[
M(x)=\left\lbrace\begin{matrix}
[-1/|x|,1/|x|],&x\neq0\\
\rr,&x=0.
\end{matrix}\right.
\]
For each $t>0$, we define the correspondence $U_t:\rr\tos\rr$ such that
\[
\gra(U_t)=\{(x,y)\in\rr^2:~d((x,y),\gra(M))<t\},
\]
where $d$ is the $\ell^1$-metric on $\rr^2$.
Clearly the following hold: $U_t$ has open graph,  $\bigcup_{t>0} U_t=\rr^2$, $\overline{\gra(U_s)}\subseteq \gra(U_t)$ for all $s<t$, and $\gra(M)=\bigcap_{t>0} U_t$. Moreover, for each $x\in\rr$, the set $U_t(x)$ is convex. Indeed, by Corollary \ref{closed-function}, there exists a family, $\{R_\lambda\}_{\lambda\in\Lambda}$, of connected subsets of $\rr$, such that $U_t(x)=\bigcup_{\lambda\in\Lambda}R_{\lambda}$ and $0\in\bigcap_{\lambda\in\Lambda}R_{\lambda}$. Hence, $U_t(x)$ is connected in $\rr$, that is, $U_t(x)$ is convex. Thus, by Theorem \ref{P-4}, there is a continuous function $\theta:\rr^2\to[0,1]$ such that
it is quasi-concave in its second argument and, for any $x\in\rr$, it holds:
\[
M(x)=\left\{ y\in\rr:~ \theta(x,y)=\max_{z\in\rr}\theta(x,z)\right\}.
\]
Since $M$ does not have compact values, we cannot apply Theorem 3.5 in \cite{LiLiFeng2022}  nor Theorem 1.3 in \cite{Ya08}.
\end{example}

 As we see below, it is easy to find other correspondences without compact values, admitting an inverse result.

\begin{proposition}\label{pro}
Let $X$ and $Y$ be two normed spaces, $D$ a non-empty and convex subset of $X$ and $M:D\tos Y$ be a correspondence with  non-empty convex and closed graph in $D\times Y$. Then, there exists a continuous function $\theta:D\times Y\to[0,1]$ such that the following two conditions hold:
\begin{itemize}
  \item [(i)] $\gra(M)=\{(x,y)\in D\times Y:~\theta(x,y)=\sup_{z\in Y}\theta(x,z)\}$, and
  \item [(ii)] $\theta(x,\cdot)$ is quasi-concave, for all $x\in D$.
\end{itemize}
\end{proposition}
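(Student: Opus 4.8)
The plan is to reduce Proposition~\ref{pro} to Theorem~\ref{P-4} by constructing an appropriate exhausting family of convex open sets around $\gra(M)$. Take $K\equiv Y$ on $D$, so that $\gra(K)=D\times Y$, which is a metric (hence normal) space; then $M(x)\subseteq K(x)$ trivially, $M$ has non-empty values, and $\gra(M)$ is closed in $D\times Y$. The natural candidate for the family is the metric $t$-neighborhood of the graph: equip $D\times Y$ with the norm $\n{(x,y)}=\n{x}_X+\n{y}_Y$ and set
\[
U_t=\{(x,y)\in D\times Y:~ \mathrm{dist}((x,y),\gra(M))<t\},\qquad t>0,
\]
together with $U_0=\oo$. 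Conditions (i)--(iii) of Theorem~\ref{P-4} are checked exactly as in Corollary~\ref{closed-function}: the $U_t$ cover $D\times Y$, they increase, $\overline{U}_s\subseteq U_t$ for $s<t$ because the closure of an open $s$-neighborhood sits inside the open $t$-neighborhood, and $\bigcap_{t>0}U_t=\gra(M)$ precisely because $\gra(M)$ is closed.

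The one genuinely new point---and the main obstacle---is verifying condition (iv): that each slice $U_t(x)=\{y\in Y:(x,y)\in U_t\}$ is convex. Here is where convexity of $\gra(M)$ enters, together with convexity of $D$. I would argue as follows. Fix $x\in D$ and $y_0,y_1\in U_t(x)$; pick $(a_0,b_0),(a_1,b_1)\in\gra(M)$ with $\n{x-a_i}_X+\n{y_i-b_i}_Y<t$ for $i=0,1$. For $\lambda\in[0,1]$ put $a_\lambda=(1-\lambda)a_0+\lambda a_1$, $b_\lambda=(1-\lambda)b_0+\lambda b_1$, and $y_\lambda=(1-\lambda)y_0+\lambda y_1$. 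Since $D$ is convex, $a_\lambda\in D$; since $\gra(M)$ is convex, $(a_\lambda,b_\lambda)\in\gra(M)$. By the triangle inequality and convexity of the norm,
\[
\n{x-a_\lambda}_X+\n{y_\lambda-b_\lambda}_Y
\le (1-\lambda)\big(\n{x-a_0}_X+\n{y_0-b_0}_Y\big)+\lambda\big(\n{x-a_1}_X+\n{y_1-b_1}_Y\big)<t,
\]
so $\mathrm{dist}((x,y_\lambda),\gra(M))<t$, i.e. $y_\lambda\in U_t(x)$. Hence $U_t(x)$ is convex. (Each $U_t$ is also open, being the preimage of $(-\infty,t)$ under the continuous distance function.)

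With (i)--(iv) in hand, Theorem~\ref{P-4} applied with $K\equiv Y$ yields a continuous $\theta:D\times Y\to[0,1]$ satisfying
\[
\gra(M)=\{(x,y)\in D\times Y:~\theta(x,y)=\sup_{z\in Y}\theta(x,z)\}
\]
with $\theta(x,\cdot)$ quasi-concave for every $x\in D$, which is exactly (i) and (ii). The only routine care needed is the degenerate case where some slice of $\gra(M)$ over an $x$ is empty or where $t$-neighborhoods behave trivially; but non-emptiness of $\gra(M)$ combined with $M$ being non-empty valued---already assumed---means the argmax slice is never all of $Y$ spuriously, exactly as in the last line of the proof of Theorem~\ref{P-3}. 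I expect the convexity check in (iv) to be the crux; everything else is a transcription of the metric-space machinery already established in Corollary~\ref{closed-function} and Theorem~\ref{P-4}.
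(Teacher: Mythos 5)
Your proposal is correct and follows essentially the same route as the paper: reduce to Theorem~\ref{P-4} with $K\equiv Y$ and $U_t$ the metric $t$-neighborhoods of $\gra(M)$, the only real work being the convexity of the slices $U_t(x)$. The sole (cosmetic) difference is that you verify this convexity by a direct triangle-inequality computation, whereas the paper invokes Lemma~\ref{lema} (convexity of the distance function to a closed convex set), which is the same argument packaged as a cited lemma.
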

The following lemma is Proposition 1.2.23 in \cite{Lucchetti2006}.
\begin{lemma}\label{lema}
 Let $H$ be a normed space, $C$ be a  closed, non-empty, and convex subset of $H$, and $f_C:H\to\rr$ be the function defined as $f_C(x)=d(x,C)$.  Then, $f_C$ is convex.
\end{lemma}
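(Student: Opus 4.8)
The plan is to establish the convexity inequality
\[
f_C(tx+(1-t)y)\le t\,f_C(x)+(1-t)\,f_C(y)
\]
directly from the definition $f_C(x)=\inf_{c\in C}\n{x-c}$, for arbitrary $x,y\in H$ and $t\in[0,1]$. The only genuine subtlety is that in a general normed space the infimum defining $f_C$ need not be attained, so I would not pick exact nearest points but rather work with $\varepsilon$-approximate minimizers and let $\varepsilon\downarrow0$ at the end.

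First I would fix $x,y\in H$, $t\in[0,1]$, and an arbitrary $\varepsilon>0$. By the definition of $f_C(x)$ and $f_C(y)$ as infima, there exist points $c_x,c_y\in C$ such that
\[
\n{x-c_x}<f_C(x)+\varepsilon
\qquad\text{and}\qquad
\n{y-c_y}<f_C(y)+\varepsilon.
\]
Second, I would use the convexity of $C$: since $c_x,c_y\in C$ and $t\in[0,1]$, the point $c_t:=t c_x+(1-t)c_y$ again lies in $C$, and is therefore an admissible competitor in the infimum defining $f_C(tx+(1-t)y)$.

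Third, I would combine these facts with the triangle inequality and positive homogeneity of the norm. Writing $z=tx+(1-t)y$, admissibility of $c_t$ gives
\[
f_C(z)\le\n{z-c_t}=\bigl\|t(x-c_x)+(1-t)(y-c_y)\bigr\|
\le t\n{x-c_x}+(1-t)\n{y-c_y}
< t\,f_C(x)+(1-t)\,f_C(y)+\varepsilon.
\]
Finally, since $\varepsilon>0$ was arbitrary, letting $\varepsilon\downarrow0$ yields the desired inequality, proving that $f_C$ is convex.

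I expect no serious obstacle here; the proof uses only that $C$ is convex and that the norm is subadditive and positively homogeneous. The main thing to handle with care is the non-attainment of the infimum, which is precisely what the $\varepsilon$-argument circumvents. I would note in passing that the closedness assumption on $C$ plays no role in the convexity itself—it is relevant only to the property that $f_C$ vanishes exactly on $C$—so the conclusion rests on convexity of $C$ together with convexity of the norm alone.
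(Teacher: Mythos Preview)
Your proof is correct and complete; the $\varepsilon$-approximation cleanly handles the possible non-attainment of the infimum, and the rest is a straightforward application of the convexity of $C$ and the subadditivity and homogeneity of the norm. Your closing remark that closedness of $C$ is not needed for convexity of $f_C$ is also accurate.

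By way of comparison: the paper does not actually prove this lemma at all---it simply records it as Proposition~1.2.23 in Lucchetti's book and moves on. So your argument is not a different route so much as the only route on offer here; it supplies a self-contained justification where the paper relies on an external reference.
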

\begin{proof}[Proof of Proposition \ref{pro}]Indeed, let $C=\gra(M)$ and, for each $t>0$, define $U_t=\{(x,y)\in D\times Y:d((x,y),\gra(M))<t\}$. We have $U_t$ has open graph,  $\bigcup_{t>0} U_t=D\times Y$, $\overline{\gra(U_s)}\subset \gra(U_t)$ for all $s<t$, and $\gra(M)=\bigcap_{t>0} U_t$. In order to apply Theorem \ref{P-4}, it only remains to prove that, for each $x\in\rr$, the sets $U_t(x)=\{y\in Y:d((x,y),\gra(M))<t\}$ are convex. Let $y_1,y_2\in U_t(x)$ and $\lambda\in[0,1]$. By Lemma \ref{lema}, we have
$$
d(\lambda(x, y_1)+(1-\lambda)(x, y_2),\gra(M))\leq \lambda d((x,y_1),\gra(M))+(1-\lambda) d((x,y_2),\gra(M))<t,
$$
which completes the proof.
\end{proof}

Next, we introduce a simple correspondence with non-compact values, where, contrary to Komiya \cite{Ko97},   Li \emph{et al.} \cite{LiLiFeng2022}, and  Yamauchi \cite{Ya08} results, our Proposition \ref{pro}  applies.

\begin{example}  Let $M:(0,\infty)\to\rr$ be the correspondence defined by
$M(x)= [1/x,\infty)$. It is clear that $M$ has a non-empty convex and closed graph. Hence, by Proposition \ref{pro},
there exists a continuous function $\theta:X\times Y\to[0,1]$ such that the following two conditions hold:
\begin{itemize}
  \item [(i)] $\gra(M)=\{(x,y)\in X\times Y:~\theta(x,y)=\sup_{z\in Y}\theta(x,z)\}$, and
  \item [(ii)] $\theta(x,\cdot)$ is quasi-concave, for all $x\in X$.
\end{itemize}
\end{example}

Now, in a similar way to Theorem \ref{t5}, we present the following result. 

\begin{proposition}
Let $X$ be a non-empty paracompact space, $Y$ be  a non-empty convex and compact subset of a locally convex space, and
 $M:X\tos Y$ be a non-empty convex compact-valued and upper semicontinuous correspondence. Then,  the following two conditions are equivalents:
 \begin{itemize}
\item [(i)] there exists a continuous function $\theta:X\times Y\to[0,1]$, such that
\[
\gra(M)=\left\{(x,y)\in X\times Y:\theta(x,y)=\sup_{z\in Y}\theta(x,z)\right\},
\]
and the function $\theta(x,\cdot):Y\to[0,1]$  is quasi-concave, for each $x\in X$, and
\item [(ii)] $\gra(M)$ is a $G_\delta$ set.
\end{itemize}
\end{proposition}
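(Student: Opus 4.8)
The plan is to prove the two implications separately: (i)$\Rightarrow$(ii) will follow at once from Berge's theorem, while for (ii)$\Rightarrow$(i) I will construct $\theta$ explicitly by convexifying the sublevel sets of a continuous function that vanishes exactly on $\gra(M)$.

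For (i)$\Rightarrow$(ii) I would apply Theorem~\ref{t1} to the constant correspondence $x\mapsto Y$ (continuous with non-empty compact values, since $Y$ is compact) together with the given continuous $\theta$, obtaining that $m(x)=\sup_{z\in Y}\theta(x,z)$ is continuous. Since $\theta(x,y)\le m(x)$ on $X\times Y$, condition (i) gives
\[
\gra(M)=\{(x,y)\in X\times Y:\theta(x,y)=m(x)\}=\bigcap_{n\in\nn}\{(x,y)\in X\times Y:\theta(x,y)-m(x)>-1/n\},
\]
an intersection of open sets by continuity of $(x,y)\mapsto\theta(x,y)-m(x)$, so $\gra(M)$ is $G_\delta$; quasi-concavity plays no role here.

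For (ii)$\Rightarrow$(i): since $M$ is upper semicontinuous with compact values, $\gra(M)$ is closed; and $X\times Y$, being the product of a paracompact (hence normal) space with a compact space, is paracompact, hence normal, so the closed $G_\delta$ set $\gra(M)$ is a zero set, i.e.\ there is a continuous $f\colon X\times Y\to[0,1]$ with $f^{-1}(0)=\gra(M)$. For $\beta\ge0$ I would set $\Psi_\beta(x)=\{z\in Y:f(x,z)\le\beta\}$, write $\overline{\co}$ for the closed convex hull taken in the ambient locally convex space $E$, and define $h\colon X\times Y\to[0,1]$ by $h(x,y)=\inf\{\beta\ge0:y\in\overline{\co}\,\Psi_\beta(x)\}$ (the set is non-empty since $\Psi_1(x)=Y$, and $\Psi_\beta(x)\supseteq M(x)\neq\emptyset$ for all $\beta\ge0$). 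The claim will be that $\theta:=1-h$ works — equivalently, that $U_t:=\{h<t\}$ satisfies the hypotheses of Theorem~\ref{P-4}. The key intermediate fact is that for each $\beta\ge0$ the correspondence $x\mapsto\overline{\co}\,\Psi_\beta(x)$ is upper semicontinuous with non-empty compact convex values — equivalently, since $Y$ is compact, it has closed graph: indeed $\Psi_\beta$ has closed graph because $f$ is continuous, hence is upper semicontinuous with compact values, and if a net $(x_\alpha,y_\alpha)\to(x_0,y_0)$ had $y_\alpha\in\overline{\co}\,\Psi_\beta(x_\alpha)$ but $y_0\notin\overline{\co}\,\Psi_\beta(x_0)$, then a Hahn--Banach strong separation of $y_0$ from the closed convex set $\overline{\co}\,\Psi_\beta(x_0)$ produces $\ell\in E^{*}$ and an open half-space $W$ with $\Psi_\beta(x_0)\subseteq W$; upper semicontinuity of $\Psi_\beta$ puts $\Psi_\beta(x_\alpha)$, hence $\overline{\co}\,\Psi_\beta(x_\alpha)$, inside $W$ eventually, contradicting $\ell(y_\alpha)\to\ell(y_0)$. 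Granting this, one gets $\{h\le\alpha\}=\bigcap_{n}\gra\big(\overline{\co}\,\Psi_{\alpha+1/n}\big)$ closed, so $h$ is upper semicontinuous and each section $\{y:h(x,y)\le\alpha\}=\bigcap_n\overline{\co}\,\Psi_{\alpha+1/n}(x)$ is convex, making $h(x,\cdot)$ quasi-convex and $\theta(x,\cdot)$ quasi-concave; and $h$ is also lower semicontinuous, because if $(x_\alpha,y_\alpha)\to(x_0,y_0)$ with $\liminf h(x_\alpha,y_\alpha)=c<h(x_0,y_0)$, then eventually $y_\alpha\in\overline{\co}\,\Psi_{c+\varepsilon}(x_\alpha)$ for $\varepsilon$ with $c+2\varepsilon<h(x_0,y_0)$, so the closed graph of $\overline{\co}\,\Psi_{c+\varepsilon}$ forces $y_0\in\overline{\co}\,\Psi_{c+\varepsilon}(x_0)$, i.e.\ $h(x_0,y_0)\le c+\varepsilon$, a contradiction. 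Hence $\theta=1-h$ is continuous and quasi-concave in its second variable. Finally the compact sets $\Psi_\beta(x)$ decrease, as $\beta\downarrow0$, to $\{z:f(x,z)=0\}=M(x)$, and for decreasing families of compact sets in a compact convex subset of a locally convex space one has $\bigcap_{\beta>0}\overline{\co}\,\Psi_\beta(x)=\overline{\co}\big(\bigcap_{\beta>0}\Psi_\beta(x)\big)$ (again by Hahn--Banach and the finite intersection property); since $M(x)$ is closed and convex, $\overline{\co}\,M(x)=M(x)$, so $h^{-1}(0)=\gra(M)$, that is $\theta^{-1}(1)=\gra(M)$. As $M(x)\neq\emptyset$, $\sup_{z\in Y}\theta(x,z)=1$ and $\{y\in Y:\theta(x,y)=\sup_{z}\theta(x,z)\}=M(x)$, which is (i).

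The step I expect to be the main obstacle is the joint continuity of $h$. The naive candidate $\theta(x,y)=1-d_p(y,M(x))$, with $d_p$ a continuous‑seminorm distance in $Y$, already has convex sections and zero set $\gra(M)$, but it is only lower semicontinuous when $M$ is merely upper semicontinuous, and this defect cannot be cured by operations inside $Y$ alone. What makes the argument go through is convexifying the sublevel sets of a genuinely continuous $f$ with $f^{-1}(0)=\gra(M)$: convexification is compatible with upper semicontinuity (this is what gives the closed graph of $\overline{\co}\,\Psi_\beta$), and that single closed‑graph property delivers both the upper semicontinuity of $h$ (as an intersection of closed graphs) and its lower semicontinuity (via the net argument), so the two halves of continuity are recovered together. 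The two technical points to nail down are therefore the upper semicontinuity of $x\mapsto\overline{\co}\,\Psi_\beta(x)$ and the commutation of $\overline{\co}$ with decreasing intersections of compact sets; both are routine separation arguments resting on compactness of $Y$ and the Hausdorff assumption on $E$.
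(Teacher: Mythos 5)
Your (i)$\Rightarrow$(ii) direction is fine and matches the paper's (which routes it through Theorem~\ref{t5} after observing that $X\times Y$ is normal). For (ii)$\Rightarrow$(i) the paper simply invokes Yamauchi's Theorem~1.3 in \cite{Ya08}; you instead try to reprove that result by convexifying the sublevel sets of a function $f$ with $f^{-1}(0)=\gra(M)$. That construction has a genuine gap exactly at the point you yourself flag as the main obstacle: the continuity of $h$. You prove that $\{h\le\alpha\}=\bigcap_n\gra\bigl(\overline{\co}\,\Psi_{\alpha+1/n}\bigr)$ is closed and conclude that $h$ is \emph{upper} semicontinuous, but closedness of the sets $\{h\le\alpha\}$ is the definition of \emph{lower} semicontinuity. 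Your subsequent net argument (``$\liminf h(x_\alpha,y_\alpha)=c<h(x_0,y_0)$ leads to a contradiction'') establishes $h(x_0,y_0)\le\liminf h(x_\alpha,y_\alpha)$, which is again lower semicontinuity. So both of your arguments land on the same half of continuity, and the other half --- openness of $\{h<\alpha\}=\bigcup_{\beta<\alpha}\gra\bigl(\overline{\co}\,\Psi_\beta\bigr)$, which is what Theorem~\ref{P-3}/\ref{P-4} actually needs for the sets $U_t=\{h<t\}$ --- is never addressed. It is also the genuinely hard half: a union of closed graphs over $\beta<\alpha$ has no reason to be open, and the closed-graph/upper-semicontinuity property of $x\mapsto\overline{\co}\,\Psi_\beta(x)$ that you correctly establish pushes in the wrong direction for it. What one would need is a lower-semicontinuity-type statement for the convexified sublevel correspondences (equivalently, that $\bigcup_{\beta<\alpha}\overline{\co}\,\Psi_\beta(x)$ is relatively open in $Y$ for each $x$, together with a tube-lemma argument in $x$); in finite dimensions this can be salvaged because $\overline{\co}$ of a compact set equals $\co$ and the convex hull of an open set is open, but in a general locally convex space $\overline{\co}\,\Psi_\beta(x)$ may strictly contain $\co\,\Psi_\beta(x)$ and the union of these closed hulls need not be open, so the argument does not close.

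The remaining ingredients of your proposal are sound: the closed-graph property of $x\mapsto\overline{\co}\,\Psi_\beta(x)$ via strong separation, the identity $\bigcap_{\beta>0}\overline{\co}\,\Psi_\beta(x)=\overline{\co}\bigl(\bigcap_{\beta>0}\Psi_\beta(x)\bigr)=M(x)$ for the decreasing compact family, and the passage from $\theta^{-1}(1)=\gra(M)$ with $M$ non-empty valued to condition (i). But as it stands the proof establishes only that $\theta=1-h$ is upper semicontinuous and quasi-concave in $y$ with $\theta^{-1}(1)=\gra(M)$, not that it is continuous. Either repair the upper semicontinuity of $h$ (which amounts to redoing the substantive part of Yamauchi's proof, where paracompactness of $X$ is used in an essential way via partitions of unity) or, as the paper does, cite Theorem~1.3 of \cite{Ya08} directly for this implication.
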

\begin{proof}
Since $X\times Y$ is normal (c.f. Corollary 1.16, Chapter 3 in \cite{MN89}), 
by Theorem \ref{t5}, condition (i) implies condition (ii). 
Reciprocally, due to Theorem 1.3 in \cite{Ya08}, we have that condition (i) follows from condition (ii). 
\end{proof}

Thanks to the previous result and Theorem 3.5 in \cite{LiLiFeng2022}, we have the following result.
\begin{proposition}
Let $X$ be a non-empty paracompact space, $Y$ be  a non-empty convex and compact subset of a locally convex space, and $M:X\tos Y$ be a non-empty convex compact-valued and upper semicontinuous correspondence. Then, the graph of $M$, $\gra(M)$, is a  $G_\delta$ set.
\end{proposition}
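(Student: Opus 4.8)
The plan is to derive this statement as an immediate consequence of the preceding proposition together with Theorem~3.5 of Li \emph{et al.} \cite{LiLiFeng2022}. The preceding proposition asserts, under exactly the hypotheses imposed here on $X$, $Y$ and $M$, the equivalence of: (i) the existence of a continuous $\theta\colon X\times Y\to[0,1]$, quasi-concave in its second variable, that realizes $\gra(M)$ as the graph of the associated argmax correspondence; and (ii) the assertion that $\gra(M)$ is a $G_\delta$ set. Hence it suffices to establish (i) unconditionally.

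First I would check that the hypotheses of Theorem~3.5 in \cite{LiLiFeng2022} are met in the present setting: $X$ is paracompact, $Y$ is a convex compact subset of a locally convex space, and $M\colon X\tos Y$ is upper semicontinuous with non-empty compact and convex values (so, in particular, with compact range contained in $Y$). That theorem then produces a continuous function $\theta\colon X\times Y\to[0,1]$, quasi-concave in its second argument, with $\gra(M)=\{(x,y)\in X\times Y:\theta(x,y)=\sup_{z\in Y}\theta(x,z)\}$, which is precisely condition~(i). Invoking the equivalence in the preceding proposition, condition~(ii) follows, i.e. $\gra(M)$ is $G_\delta$.

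Alternatively, and more transparently, once Theorem~3.5 of \cite{LiLiFeng2022} has supplied such a $\theta$, I would argue directly: applying Berge's maximum theorem (Theorem~\ref{t1}) with the constant correspondence $K\equiv Y$, which is continuous with non-empty compact values, shows that $m(x):=\sup_{z\in Y}\theta(x,z)$ is continuous on $X$. Then $(x,y)\mapsto\theta(x,y)-m(x)$ is continuous on $X\times Y$, and since $\theta(x,y)\le m(x)$ for every $(x,y)\in X\times Y$, one has $\theta(x,y)=m(x)$ if and only if $\theta(x,y)>m(x)-1/n$ for all $n\ge 1$; consequently $\gra(M)=\bigcap_{n\ge 1}\{(x,y)\in X\times Y:\theta(x,y)>m(x)-1/n\}$ displays $\gra(M)$ as a countable intersection of open sets.

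The only point that needs care — the main obstacle — is confirming that the statement of Theorem~3.5 in \cite{LiLiFeng2022} matches the present hypotheses and, in particular, that the $\theta$ it yields can be taken with values in $[0,1]$ and with quasi-concave sections, so that it genuinely witnesses condition~(i) of the preceding proposition. If the quasi-concavity is not explicitly part of the cited statement, the direct argument above sidesteps the issue, since it uses only the argmax characterization of $\gra(M)$ furnished by Theorem~3.5 together with the continuity of $m$ from Theorem~\ref{t1}. Everything else is routine.
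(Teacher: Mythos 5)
Your proposal is correct and is exactly the paper's own derivation: the paper obtains this proposition by invoking Theorem~3.5 of Li \emph{et al.} to produce the continuous, quasi-concave-in-$y$ function $\theta$ realizing condition~(i) of the preceding proposition, and then reads off condition~(ii) from the stated equivalence. Your direct alternative (writing $\gra(M)=\bigcap_{n\ge 1}\{\theta(x,y)>m(x)-1/n\}$ via the continuity of $m$ from Berge's theorem) is just an unpacking of the (i)$\Rightarrow$(ii) direction already proved in Theorem~\ref{t5}, so it adds robustness but not a genuinely different route.
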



\section{Applications to generalized Nash games}\label{Applications}
A \emph{Nash game}, \cite{Na51}, consists of $p$ players, each player $i$ controls the decision variable $x_i$, which belongs to a subset $C_i$ of a topological space $E_i$.
 The ``total strategy vector'' is $x$,
 which will be often denoted by
 \[
  x=(x_1,\dots,x_i,\dots,x_p).
 \]
Sometimes we write $(x_i,x_{-i})$ instead of $x$ in order to emphasize the $i$-th player's variables within $x$, where $x_{-i}$ is the strategy vector of the other players.
 Player $i$ has a payoff function $\theta_i:C\to\rr$ that depends on all player's strategies, where $C=\prod_{i=1}^p C_i$.
 Given the strategies $x_{-i}\in C_{-i}=\prod_{j\neq i}C_j$ of the other players, the aim of player $i$ is to choose a strategy $x_i$ solving the problem $P_i(x_{-i})$:
\begin{align*}
\max_{ x_i }\theta_i(x_i,x_{-i}) ~\mbox{ subject to }~x_i\in C_i.
\end{align*}
 A vector $\hat{x}\in C$ is a \emph{Nash equilibrium} if, for all $i\in\{1,\dots,p\}$, $\hat{x}_i$ solves $P_i(\hat{x}_{-i})$. We denote by $NG(\theta_i,C_i)$ the set of Nash equilibria associated to the functions $\theta_i$ and the sets $C_i$.

In a generalized Nash game,  each player's strategy must belong to a set identified by the correspondence $K_i: C_{-i}\tos C_i$ in the sense that the strategy space of player $i$ is $K_i(x_{-i})$, which depends on the rival player's strategies $x_{-i}$.
 Given the strategy $x_{-i}$,  player $i$ chooses a strategy $x_i$ such that it solves the following problem $GP(x_{-i})$
 \begin{equation*}
\max_{x_i}\theta_i(x_i,x_{-i})~\mbox{ subject to }~x_i\in K_i(x_{-i}).
\end{equation*}
Thus, a \emph{generalized Nash equilibrium} is a vector $\hat{x}\in C$ such that
 the strategy $\hat{x}_i$ is a solution of the problem $GP(\hat{x}_{i})$, for any $i\in\{1,\dots,p\}$. We denote by $GNG(\theta_i,K_i)$ the set of generalized Nash equilibria associated to the functions $\theta_i$ and the correspondences $K_i$. Thus,
\[
GNG(\theta_i,K_i)=\{\hat{x}\in C:~\hat{x}\in NG(\theta_i, K_i(\hat{x}_{-i})\}.
\]

It is clear that any Nash game is a generalized Nash game. However, the last one is more complex due to the strategy set of each player depends of the strategy of his/her rivals.

\subsection{An inverse Nash theorem}

It is not difficult to see the following: 
$$
GNG(\theta_i,K_i)=\bigcap_{i=1}^p\left\{x\in C:\theta_i(x)=\max_{z_i\in K_i(x_{-i})}\theta_i(z_i,x_{-i})\right\}.
$$
Indeed, we notice that, if for any player $i$, we consider its argmax correspondence $M_i:C_{-i}\tos C_i$; then $\left\{x\in C:\theta_i(x)=\max_{z_i\in K_i(x_{-i})}\theta_i(z_i,x_{-i})\right\}=\gra(M_i)$. Thus,
\[
GNG(\theta_i,K_i)=\bigcap_{i=1}^p\gra(M_i).
\]

The following result establishes that $GNG(\theta_i,K_i)$ is a $G_\delta$ set, under suitable assumptions.
\begin{proposition}
For each $i\in\{1,\dots,p\}$, let $K_i: C_{-i}\tos C_i$ be a continuous correspondence  with non-empty and compact values. If for each $i\in\{1,\dots,p\}$, the payoff function $\theta_i:C\to\rr$ is continuous, then $GNG(\theta_i,K_i)$ is a $G_\delta$ set.
\end{proposition}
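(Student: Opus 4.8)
The plan is to reduce everything to Berge's maximum theorem via the identity $GNG(\theta_i,K_i)=\bigcap_{i=1}^{p}\gra(M_i)$ established just above, together with the trivial fact that a finite (hence countable) intersection of $G_\delta$ sets is again $G_\delta$. So it suffices to prove that, for each fixed $i$, the set $\gra(M_i)$ is a $G_\delta$ set.

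Fix $i$ and put $m_i(x_{-i})=\sup_{z_i\in K_i(x_{-i})}\theta_i(z_i,x_{-i})$. First I would apply Theorem \ref{t1} to the continuous correspondence $K_i:C_{-i}\tos C_i$, which has non-empty compact values, and to the continuous function $\theta_i$: this yields that $M_i$ is upper semicontinuous with non-empty compact values and, crucially, that $m_i$ is continuous on $C_{-i}$, so that $x\mapsto m_i(x_{-i})$ is continuous on $C$. Since an upper semicontinuous compact-valued correspondence has closed graph, the set $\Gamma_i=\{x\in C:\ x_i\in K_i(x_{-i})\}$ (the graph of $K_i$ read inside $C$) is closed in $C$; and since $K_i(x_{-i})$ is compact and $\theta_i$ continuous, the supremum defining $m_i$ is attained, whence $\theta_i(x)\le m_i(x_{-i})$ for every $x\in\Gamma_i$, with equality exactly on $\gra(M_i)$.

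Next I would write
\[
\gra(M_i)=\bigcap_{n=1}^{\infty}\Big(\Gamma_i\cap\big\{x\in C:\ \theta_i(x)>m_i(x_{-i})-\tfrac1n\big\}\Big),
\]
which is correct because, on $\Gamma_i$, requiring $\theta_i(x)>m_i(x_{-i})-1/n$ for every $n$ is equivalent to $\theta_i(x)=m_i(x_{-i})$. Each set in this intersection is the trace on $\Gamma_i$ of a subset of $C$ that is open by continuity of $\theta_i$ and of $x\mapsto m_i(x_{-i})$, hence it is open in $\gra(K_i)\cong\Gamma_i$; therefore $\gra(M_i)$ is a $G_\delta$ set in exactly the sense in which graphs are declared $G_\delta$ throughout the paper (literally a $G_\delta$ subset of $C$ when the $E_i$ are metrizable, since closed sets are then $G_\delta$). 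Intersecting over $i\in\{1,\dots,p\}$ gives the conclusion.

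The one point that needs care is that the feasibility constraint $x_i\in K_i(x_{-i})$ cannot be dropped from the displayed representation: the sets $\{x\in C:\ \theta_i(x)>m_i(x_{-i})-1/n\}$ alone are genuinely open in $C$, but their intersection is the strictly larger set $\{x:\ \theta_i(x)\ge m_i(x_{-i})\}$. Thus the argument really relies on the two outputs of Berge's theorem — continuity of $m_i$, so that the constraint slices are relatively open, and compactness of $K_i(x_{-i})$, so that $\theta_i\le m_i$ on $\Gamma_i$ and equality can be described by strict inequalities — together with the closedness of $\Gamma_i=\gra(K_i)$.
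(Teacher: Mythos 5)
Your proof is correct and follows the same overall strategy as the paper's: apply Berge's maximum theorem to get continuity of each $m_i$, then exhibit $GNG(\theta_i,K_i)=\bigcap_i\gra(M_i)$ as a countable intersection of open sets. But your more careful execution exposes a genuine defect in the paper's own argument. The paper's proof rests on the displayed identity
\[
GNG(\theta_i,K_i)=\bigcap_{n=1}^\infty\bigcap_{i=1}^p\{x\in C:\theta_i(x)>(1-1/n)m_i(x_{-i})\},
\]
which is false in general: it omits the feasibility constraint $x_i\in K_i(x_{-i})$ (so the right-hand side can contain infeasible points $x$ with $\theta_i(x)\ge m_i(x_{-i})$, and can even equal all of $C$ while $GNG$ is a proper subset), and the multiplicative factor $(1-1/n)$ misbehaves whenever $m_i(x_{-i})\le 0$ (e.g.\ if $m_i\equiv 0$ the right-hand side is empty even though equilibria exist). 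Your representation $\gra(M_i)=\bigcap_n\bigl(\Gamma_i\cap\{x:\theta_i(x)>m_i(x_{-i})-1/n\}\bigr)$ repairs both problems. The price, which you correctly flag, is that $\Gamma_i=\gra(K_i)$ is only closed, so what you obtain directly is a $G_\delta$ relative to $\gra(K_i)$; to get a genuine $G_\delta$ in $C$ one needs closed sets of $C$ to be $G_\delta$ (metrizability, or perfect normality, of the $E_i$), a hypothesis the proposition does not state. This is a real limitation of the statement in the general topological setting, not a gap in your argument, and it is consistent with the paper's ambient ambiguity about whether ``$G_\delta$'' is meant relative to the graph of the constraint map (as in Theorem \ref{t5}) or in the ambient product.
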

\begin{proof}
For each $i\in\{1,\dots,p\}$, let $m_i:C_{-i}\to\mathbb{R}$ be a function defined as
\[
m_i(x_{-i})=\max_{x_i\in K_i(x^{-i})}\theta_i(x_i,x_{-i}).
\]
The Berge maximum theorem, Theorem \ref{t1}, implies that $m_i$ is continuous. Thus, the result follows from
\[
GNG(\theta_i,K_i)=\bigcap_{n=1}^\infty\bigcap_{i=1}^p\{x\in C:\theta_i(x)>(1-1/n)m_i(x_{-i})\}.
\]
\end{proof}

It is important to notice that in the previous result, each payoff function $\theta_i$ is continuous on $C$, but in order to apply Berge's maximum theorem we just need that $\theta_i$ be continuous on $\gra(K_i)$.

Now, we are interested in the inverse problem, which consists of finding payoff functions $\theta_1,\dots,\theta_p$ giving as solution  a predetermined strategy set, $\hat{X}$,  of the game. In other words, given the correspondences $K_i$ and a set $\hat{X}\subseteq C$ we want to find payoff functions $\theta_i$ such that $\hat{X}=GNG(\theta_i,K_i)$.

We present the following example to illustrate the previous problem.

\begin{example}\label{exa-inverse}
Consider $C_1=C_2=[0,1]$ and the correspondences $K_1,K_2:[0,1]\tos[0,1]$ defined as
\[
K_1(y)=[0,y]\mbox{ and }K_2(x)=[0,1-x].
\]
Figure \ref{F1} shows their graphs and consider $\hat{X}=\gra(K_1)\cap \gra(K_2)$.
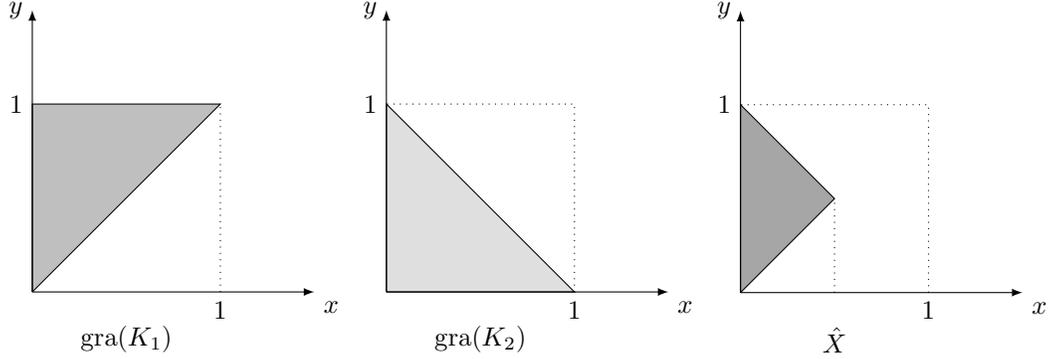
\begin{figure}[h!]
\centering
\begin{tikzpicture}[>=latex,scale=2.5]
\draw[fill=gray!50](0,0)--(1,1)--(0,1)--(0,0);
\draw[->](0,0)--(1.5,0)node[below right]{$x$};
\draw[->](0,0)--(0,1.5)node[left]{$y$};
\draw[dotted](1,0)--(1,1);
\draw(1,0)node[below]{$1$};
\draw(0,1)node[left]{$1$};
\draw(0.5,-0.25)node[]{$\gra(K_1)$};
\end{tikzpicture}
\begin{tikzpicture}[>=latex,scale=2.5]
\draw[fill=gray!25](0,0)--(0,1)--(1,0)--(0,0);
\draw[->](0,0)--(1.5,0)node[below right]{$x$};
\draw[->](0,0)--(0,1.5)node[left]{$y$};
\draw[dotted](1,0)--(1,1)--(0,1);
\draw(1,0)node[below]{$1$};
\draw(0,1)node[left]{$1$};
\draw(0.5,-0.25)node[]{$\gra(K_2)$};
\end{tikzpicture}
\begin{tikzpicture}[>=latex,scale=2.5]
\draw[fill=gray!70](0,0)--(0,1)--(0.5,0.5)--(0,0);
\draw[->](0,0)--(1.5,0)node[below right]{$x$};
\draw[->](0,0)--(0,1.5)node[left]{$y$};
\draw[dotted](1,0)--(1,1)--(0,1);
\draw[dotted](0.5,0)--(0.5,0.5);
\draw(1,0)node[below]{$1$};
\draw(0,1)node[left]{$1$};
\draw(0.5,-0.25)node[]{$\hat{X}$};
\end{tikzpicture}
\caption{Graphs of the sets $\gra(K_1)$, $\gra(K_2)$, and $\hat{X}$. }\label{F1}
\end{figure}

Now, we ask then do there are continuous functions $\theta_1,\theta_2:[0,1]\times[0,1]\to\rr$ such that $\hat{X}=GNG(\theta_i,K_i)$? In this case, the answer is positive. Indeed,
we can consider the functions $\theta_1,\theta_2:[0,1]\times[0,1]\to\rr$ defined as
\[
\theta_1(x,y)=1-d((x,y),\gra(K_1))\wedge 1\mbox{ and }\theta_2(x,y)=1-d((x,y),\gra(K_2))\wedge 1,
\]
where $d$ denotes the Euclidean metric in $\rr^2$.
Clearly $\theta_1$ and $\theta_1$ are continuous on $[0,1]\times[0,1]$. Moreover, it is not difficult to show that $\hat{X}=GNG(\theta_i,K_i)$, for $i\in\{1,2\}$.
\end{example}

The following result gives sufficient conditions to obtain a positive answer to this new problem.

\begin{proposition}\label{prop}
For each $i\in\{1,\dots,p\}$, let $K_i: C_{-i}\tos C_i$ be a continuous correspondence  with non-empty and compact values, and normal graph; and let $\hat{X}\subseteq \bigcap_{i=1}^p\gra(K_i)$. If $\hat{X}$ is a $G_\delta$ closed set then there exist
continuous functions $\theta_i:\gra(K_i)\to[0,1]$ such that
\[
\hat{X}=GNG(\theta_i,K_i).
\]
\end{proposition}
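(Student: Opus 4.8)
The plan is to deduce the statement from the single-correspondence inverse maximum theorem, Theorem~\ref{t5}, applied separately to each player, via the identity
\[
GNG(\theta_i,K_i)=\bigcap_{i=1}^p\gra(M_i)
\]
established above, where $M_i:C_{-i}\tos C_i$ denotes the argmax correspondence associated with $K_i$ and $\theta_i$. Thus it is enough to produce, for each $i$, a continuous function $\theta_i:\gra(K_i)\to[0,1]$ whose argmax correspondence has graph a prescribed closed $G_\delta$ subset $N_i$ of $\gra(K_i)$, these subsets being chosen so that $\bigcap_{i=1}^pN_i=\hat X$.

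First I would fix the candidate graphs. Since $\hat X$ is closed and $G_\delta$ in $C$ and $\hat X\subseteq\gra(K_i)$, it is closed and $G_\delta$ in the normal space $\gra(K_i)$ as well. Writing $Z=\bigcap_{j=1}^p\gra(K_j)$, a natural choice is to concentrate the information in one player: set $N_i=\gra(K_i)$ for $i\ge 2$, and let $N_1$ be a closed $G_\delta$ subset of $\gra(K_1)$ with $\hat X\subseteq N_1$, $N_1\cap Z=\hat X$, and such that $N_1$ meets the fibre $\{x_1:(x_{-1},x_1)\in\gra(K_1)\}$ for every $x_{-1}\in C_{-1}$; such an $N_1$ is obtained by adjoining to $\hat X$ a suitable closed portion of the open set $\gra(K_1)\setminus Z$, so as to fill in the missing fibres of $\hat X$ without adding points of $Z$. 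Because $\gra(M_i)\subseteq\gra(K_i)$ and $\hat X\subseteq Z$, these choices give $\bigcap_{i=1}^pN_i=N_1\cap\bigcap_{i\ge 2}\gra(K_i)=N_1\cap Z=\hat X$.

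Next I would apply Theorem~\ref{t5} to each $i$: the correspondence $K_i$ is continuous with non-empty compact values and normal graph, the correspondence $M_i$ with $\gra(M_i)=N_i$ has non-empty values contained in $K_i$ (this is precisely why $N_1$ must meet every fibre), and $\gra(M_i)=N_i$ is closed and $G_\delta$; hence, by the implication (b)$\Rightarrow$(a) of that theorem, there is a continuous $\theta_i:\gra(K_i)\to[0,1]$ with
\[
\gra(M_i)=\Big\{(x_{-i},x_i)\in\gra(K_i):\theta_i(x_{-i},x_i)=\sup_{z_i\in K_i(x_{-i})}\theta_i(z_i,x_{-i})\Big\}=N_i .
\]
Substituting these payoffs into the displayed identity for $GNG$ yields $GNG(\theta_i,K_i)=\bigcap_{i=1}^pN_i=\hat X$, as desired.

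I expect the main obstacle to be the construction of $N_1$ (or, more generally, deciding how to distribute the ``filling in'' among the players): Theorem~\ref{t5} forces the realized argmax correspondences to have non-empty values, so one cannot simply take $\gra(M_i)=\hat X$ unless $\hat X$ already projects onto each $C_{-i}$; the necessary enlargement of $\hat X$ must be carried out inside $\gra(K_i)\setminus Z$ in order to keep the intersection equal to $\hat X$, while at the same time preserving closedness and the $G_\delta$ property in $\gra(K_i)$. Once these set-theoretic and topological adjustments are in place, the rest is a routine invocation of Theorem~\ref{t5} together with the elementary description of $GNG$ as an intersection of argmax graphs.
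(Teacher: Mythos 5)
Your overall strategy --- realize $\hat X$ as $\bigcap_{i=1}^p\gra(M_i)$ by prescribing for each player a target argmax graph $N_i$ that is closed, $G_\delta$, and meets every fibre of $\gra(K_i)$, and then invoking Theorem \ref{t5} (b)$\Rightarrow$(a) player by player --- is reasonable, and you have correctly isolated the real difficulty: Theorem \ref{t5} requires the argmax correspondence to be non-empty valued, so one cannot simply take every $N_i=\hat X$. (This difficulty is genuine; the paper's own proof builds Urysohn-type functions $\theta_i$ with $\theta_i^{-1}(\{1\})=\hat X$ and concludes immediately, without addressing it: on a fibre $K_i(\hat x_{-i})\times\{\hat x_{-i}\}$ that misses $\hat X$, the maximum of $\theta_i(\cdot,\hat x_{-i})$ is attained somewhere with value $<1$, and such points can conspire across players to produce equilibria outside $\hat X$.)

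However, the construction of the sets $N_i$ --- which is the heart of your argument --- does not go through as described. Setting $N_i=\gra(K_i)$ for $i\ge 2$ forces $N_1\cap Z=\hat X$ with $Z=\bigcap_{j}\gra(K_j)$, hence $N_1\setminus\hat X\subseteq\gra(K_1)\setminus Z$; but a fibre of $\gra(K_1)$ may lie entirely inside $Z$, leaving no room to ``fill in''. Concretely, take $p=2$, $C_1=C_2=[0,1]$, $K_1\equiv C_1$, $K_2\equiv C_2$, and $\hat X=\{(0,0)\}$: then $\gra(K_1)=\gra(K_2)=Z=C$, so your constraints force $N_1=\hat X$, which misses every fibre over $x_{-1}\neq 0$, and no admissible $N_1$ exists. (The conclusion of the proposition is still attainable in this example, e.g.\ with $\theta_1(x,y)=1-x$ and $\theta_2(x,y)=1-y$, whose argmax graphs $\{0\}\times[0,1]$ and $[0,1]\times\{0\}$ intersect exactly in $\hat X$ --- but this requires enlarging $\hat X$ \emph{inside} $Z$ for several players simultaneously, arranging the enlargements so that they overlap only on $\hat X$, which is precisely what your recipe forbids.) Even when $\gra(K_1)\setminus Z$ is non-empty, you assert rather than prove the existence of a closed $G_\delta$ subset of it meeting exactly the required fibres while keeping $N_1$ closed and $G_\delta$ in $\gra(K_1)$; in a general topological space this is a non-trivial selection problem. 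So the key step --- producing the $N_i$ --- is missing, and the specific choice you propose provably fails in simple cases.
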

\begin{proof}
Suppose there exists a non-increasing sequence of open subsets of $C$, $\{U_n\}_{n\in\mathbb{N}}$, such that $\hat{X}=\bigcap_{n\in\mathbb{N}}U_n$. For each $i\in\{1,\dots,p\}$, since $\gra(K_i)$ is normal,  for each $n\in\mathbb{N}$, there exists a Urysohn function $\theta_{n,i}:\gra(K_i)\to[0,1]$ such that $\theta_{n,i}\equiv 1$ on $\hat{X}$ and $\theta_{n,i}\equiv 0$ on $\gra(K_i)\setminus U_n$. Let $\theta_{i}:\gra(K_i)\to[0,1]$ be defined as
\[
\theta_{i}(x)=\sum_{n=1}^\infty\frac{1}{2^n}\theta_{n,i}(x).
\]
We have, $\theta_{i}$ is continuous, $\theta_{i}^{-1}(\{1\})=\hat{X}$ and therefore
$\hat{X}=GNG(\theta_i,K_i)$, which completes the proof.
\end{proof}

\begin{remark}
When $C$ is a subset of a metric space with metric $d$ and the graphs $\gra(K_i)$ are closed, functions $\theta_i:\gra(K_i)\to[0,1]$ in Proposition \ref{prop} can be defined as $\theta_i(x)=1-d(x,\gra(K_i))\wedge 1$, for all $i\in\{1,\dots,p\}$.
\end{remark}

We give an interpretation of the previous problem as follows: agents could have a priori a set of strategies, which, due to certain restrictions, are not possible to be replaced. In this case, they only go to the market to participate in businesses whose payment functions favor the application of their strategies. An inverse Nash theorem provides the existence of appropriate actions giving place to predetermined results.

\subsection{Equivalent results}

This section aims to show the equivalence between two known results. First, we stated a classical result concerning the existence of Nash equilibria, inspired  by Debreu, Fan, and Glicksberg.

\begin{theorem}\label{D}
Suppose for each $i\in\{1,2,\dots,p\}$, $C_i$ is  a compact, convex and non-empty subset of a locally convex topological vector space $E_i$,
the payoff function $\theta_i$ is continuous and the correspondence $M_i:C_{-i}\tos C_i$, defined as
\[
M_i(x_{-i})=\left\{x_i\in C_i:~\theta_i(x_i,x_{-i})=\max_{z_i\in C_i}\theta_i(z_i,x_{-i})\right\},
\]
is convex-valued. Then,
the set $NG(\theta_i,C_i)$ is non-empty.

\end{theorem}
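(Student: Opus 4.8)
The plan is to realize a Nash equilibrium as a fixed point of the joint best-response correspondence and to invoke the Kakutani-Fan-Glicksberg fixed point theorem on the locally convex space $E=\prod_{i=1}^p E_i$. First, for each $i$ take the constant correspondence $K_i\equiv C_i:C_{-i}\tos C_i$, which is continuous with non-empty compact values; since $\theta_i$ is continuous on $C\cong C_i\times C_{-i}=\gra(K_i)$, Berge's maximum theorem (Theorem \ref{t1}) applied to $K_i$ and $\theta_i$ shows that the argmax correspondence $M_i:C_{-i}\tos C_i$ is upper semicontinuous and has non-empty compact values. By hypothesis $M_i$ is moreover convex-valued, so each $M_i$ has non-empty, compact and convex values.

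Next, define $M:C\tos C$ by $M(x)=\prod_{i=1}^p M_i(x_{-i})$. Its values are finite products of non-empty compact convex sets, hence non-empty, compact and convex. Each coordinate map $x\mapsto M_i(x_{-i})$ is the composition of the continuous projection $C\to C_{-i}$ with the upper semicontinuous compact-valued $M_i$; since the target $C_i$ is compact Hausdorff, this coordinate map has closed graph, a finite product of correspondences with closed graph into compact Hausdorff spaces again has closed graph, and a correspondence with closed graph into a compact space is upper semicontinuous. Thus $M$ is upper semicontinuous with non-empty compact convex values, and $C=\prod_{i=1}^p C_i$ is a non-empty compact convex subset of the locally convex space $E$. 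The Kakutani-Fan-Glicksberg fixed point theorem then yields $\hat x\in C$ with $\hat x\in M(\hat x)$; equivalently $\hat x_i\in M_i(\hat x_{-i})$ for every $i$, i.e. $\hat x_i$ solves $P_i(\hat x_{-i})$. Hence $\hat x\in NG(\theta_i,C_i)$, so this set is non-empty.

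The step I expect to require the most care is the second one: transporting ``upper semicontinuous with compact values'' through the projections and the finite product and then back, that is, checking in the infinite-dimensional product that the compactness and Hausdorff hypotheses legitimately permit passing between ``closed graph'' and ``upper semicontinuous'' at each stage. The remaining ingredients — non-emptiness and compactness of the sets $M_i(x_{-i})$, and continuity of the constant correspondences $K_i$ — are immediate from Theorem \ref{t1} and the hypotheses, while convexity of the values is assumed outright. (If one prefers a self-contained account, one may instead invoke directly the versions of Kakutani-Fan-Glicksberg of \cite{Fa52,Gl52}.)
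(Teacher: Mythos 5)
Your proof is correct, but note that the paper itself offers no proof of Theorem \ref{D}: it is stated as a classical result (Debreu--Fan--Glicksberg) and cited, and the paper's contribution around it is the circle of implications (Theorem \ref{D} $\Rightarrow$ Theorem \ref{A-D} via Theorem \ref{GNEP-NEP}, Theorem \ref{D} $\Rightarrow$ Theorem \ref{KFFPT} via Proposition \ref{D-KFFPT}, and Theorem \ref{KFFPT} from Fan's minimax inequality). What you supply is exactly the standard converse direction Theorem \ref{KFFPT} $\Rightarrow$ Theorem \ref{D} that the paper leaves implicit in its claim of equivalence: apply Berge's maximum theorem to the constant constraint correspondences $K_i\equiv C_i$ to get each best-response map $M_i$ upper semicontinuous with non-empty compact (and, by hypothesis, convex) values, pass to the product correspondence $M(x)=\prod_i M_i(x_{-i})$ via the closed-graph characterization of upper semicontinuity into compact Hausdorff spaces, and invoke Kakutani--Fan--Glicksberg on $C=\prod_i C_i$. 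All of these steps are sound (in particular, the graph of $M$ is the intersection over $i$ of the preimages of $\gra(M_i)$ under the continuous maps $(x,y)\mapsto(x_{-i},y_i)$, so it is closed, and closedness of the graph does recover upper semicontinuity since $C$ is compact). The only caution is logical bookkeeping rather than mathematics: since the paper derives Theorem \ref{KFFPT} \emph{from} Theorem \ref{D} in Proposition \ref{D-KFFPT}, your argument must rest on an independent proof of Kakutani--Fan--Glicksberg (e.g.\ the original \cite{Fa52,Gl52}, or the paper's Proposition \ref{mmc->KFF} via Ky Fan's minimax inequality, whose proof does not pass through Theorem \ref{D}); with that understood there is no circularity, and your proof is the natural way to close the paper's equivalence loop.
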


The following result is due to Arrow and Debreu \cite{AD54}.  We state it in the setting of locally convex spaces and  slightly modify the convexity condition as follows.

\begin{theorem}\label{A-D}
Suppose for each $i\in\{1,2,\dots,p\}$, $C_i$ is compact, convex and non-empty subset of a locally convex topological vector space $E_i$, and the following three conditions hold:
\begin{itemize}
\item[(i)] the payoff function $\theta_i$ is continuous,
\item[(ii)] the correspondence $K_i$ is continuous  with convex, closed and non-empty values, and
\item[(iii)] the correspondence $M_i:C_{-i}\tos C_i$ defined as
\[
M_i(x_{-i})=\left\{x_i\in C_i:~\theta_i(x_i,x_{-i})=\max_{z_i\in K_i(x_{-i})}\theta_i(z_i,x_{-i})\right\}
\]
is  convex-valued.
\end{itemize}
Then, the set $GNG(\theta_i,K_i)$ is non-empty. 
\end{theorem}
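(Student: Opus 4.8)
The plan is to reduce the generalized Nash game $(\theta_i,K_i)_{i=1}^p$ to a classical Nash game on the \emph{same} strategy sets $C_i$, by absorbing each constraint correspondence $K_i$ into a new payoff function, and then to invoke Theorem \ref{D}. Fix $i\in\{1,\dots,p\}$ and consider the argmax correspondence $M_i:C_{-i}\tos C_i$ from condition (iii). Since $C_i$ is compact and $K_i$ is continuous with closed --- hence compact --- non-empty values, and $\theta_i$ is continuous, Berge's maximum theorem (Theorem \ref{t1}) tells us that $M_i$ is upper semicontinuous with non-empty compact values; by hypothesis (iii) it is moreover convex-valued. Also $C_{-i}=\prod_{j\neq i}C_j$ is compact Hausdorff, hence paracompact, and $C_i$ is a convex compact subset of the locally convex space $E_i$. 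Thus $M_i$ satisfies the hypotheses of the closing proposition of Section \ref{Results}, so $\gra(M_i)$ is a $G_\delta$ subset of $C_{-i}\times C_i$; and since $C_{-i}\times C_i$ is compact Hausdorff, hence normal, Proposition \ref{t7} (applied with $X=C_{-i}$, $Y=C_i$, $M=M_i$) produces a continuous function $\tilde\theta_i:C_{-i}\times C_i\to[0,1]$ with
\[
\gra(M_i)=\Bigl\{(x_{-i},x_i)\in C_{-i}\times C_i:\ \tilde\theta_i(x_{-i},x_i)=\max_{z_i\in C_i}\tilde\theta_i(x_{-i},z_i)\Bigr\}.
\]

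Next I would identify $C_{-i}\times C_i$ with $C=\prod_{j=1}^p C_j$ by reordering coordinates, so that each $\tilde\theta_i$ becomes a continuous payoff function on $C$, and consider the classical Nash game with strategy sets $C_i$ and payoffs $\tilde\theta_i$. By construction the argmax correspondence of this game --- namely $x_{-i}\mapsto\{x_i\in C_i:\tilde\theta_i(x_i,x_{-i})=\max_{z_i\in C_i}\tilde\theta_i(z_i,x_{-i})\}$ --- is exactly $M_i$, which is convex-valued. Hence Theorem \ref{D} applies to the data $(\tilde\theta_i,C_i)$ and yields $NG(\tilde\theta_i,C_i)\neq\emptyset$.

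It then remains only to check that the original generalized game and the new classical game have the same equilibrium set. As observed after the definition of $GNG$, one has $GNG(\theta_i,K_i)=\bigcap_{i=1}^p\gra(M_i)$; on the other hand $\hat x\in NG(\tilde\theta_i,C_i)$ means precisely that $\hat x_i\in M_i(\hat x_{-i})$ for every $i$, i.e.\ $NG(\tilde\theta_i,C_i)=\bigcap_{i=1}^p\gra(M_i)$ as well. Therefore $GNG(\theta_i,K_i)=NG(\tilde\theta_i,C_i)\neq\emptyset$, as desired.

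The genuinely substantial ingredient here is the fact that $\gra(M_i)$ is a $G_\delta$ set; in the locally convex, convex-valued setting this is handed to us by the closing proposition of Section \ref{Results} (which ultimately rests on the theorems of Yamauchi and Li \emph{et al.}), so the remaining work is essentially bookkeeping. The two points to be careful about are: (1) the coordinate identification $C_{-i}\times C_i\cong C$, so that ``maximizing $\tilde\theta_i$ over $x_i\in C_i$'' in the classical game corresponds, through the graph of $M_i$, to the original constrained maximization over $K_i(x_{-i})$; and (2) that the only properties of the new data required by Theorem \ref{D} are continuity of the payoff and convex-valuedness of the argmax --- no constraint correspondence occurs there, which is the whole point of the reduction, and both properties are exactly what the inverse maximum theorem delivers.
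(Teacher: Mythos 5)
Your proof is correct and follows essentially the same route as the paper, which deduces Theorem \ref{A-D} from Theorem \ref{D} by first converting the generalized game into a classical one on the full strategy sets $C_i$ (Theorem \ref{GNEP-NEP}, itself resting on Proposition \ref{t7} applied to the argmax correspondences $M_i$). If anything you are more careful than the paper: you explicitly secure the $G_\delta$ hypothesis of Proposition \ref{t7} via the closing proposition of Section \ref{Results} (using the convexity of the values of $M_i$ from hypothesis (iii) and the locally convex setting), whereas the paper's proof of Theorem \ref{GNEP-NEP} invokes Proposition \ref{t7} without verifying that $\gra(M_i)$ is a $G_\delta$ subset of $C$.
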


\begin{remark}\label{re1}
In the original version of Theorems \ref{D} and \ref{A-D}, the quasi-concavity of $\theta_i$ in $x_i$ for all players was assumed  by Debreu, Fan and Glickberg for Theorem \ref{D}, and Debreu and Arrow for Theorem \ref{A-D}. However this assumption implies the maps $M_i$ are convex-valued in both results.
Moreover, since Theorem \ref{A-D} was initially proved in finite dimensional spaces, this is a generalized version of the original result.
\end{remark}

In the following example we can see that a particular generalized Nash equilibrium problem can be considered as a classical Nash game.

\begin{example}
Consider a generalized Nash game with two players with correspondences $K_1,K_2:[0,1]\tos[0,1]$ defined as in Example \ref{exa-inverse} and payoff functions $\theta_1,\theta_2:[0,1]^2\to\rr$ defined as
\[
\theta_1(x,y)=y-x^2\mbox{ and }\theta_2(x,y)=2x-y^2.
\]
Thus, their maximizer correspondences are
\[
M_1(y)=\{0\}\mbox{ and }M_2(x)=\{0\},
\]
which are also upper semicontinuous with convex, compact, and non-empty values. Futhermore, we can consider the new functions $\vartheta_1,\vartheta_2:[0,1]^2\to\rr$ defined as
\[
\vartheta_1(x,y)=-x\mbox{ and }\vartheta_2(x,y)=-y,
\]
which are continuous and concave. Moreover, $ GNG(\theta_i,K_i)=NG(\vartheta_i,C_i)$.
\end{example}

In the line of Theorem 5.6 in \cite{Co21},
 we will show that it is possible to reformulate the generalized Nash game  as a classical Nash game.
\begin{theorem}\label{GNEP-NEP}
Suppose for each $i\in\{1,2,\dots,p\}$, $C_i$ is compact and non-empty, and the following two conditions hold:
\begin{itemize}
\item[(i)] the payoff function $\theta_i$ is continuous, and
\item[(ii)] the set-valued map $K_i$ is continuous  with  closed and non-empty values.
\end{itemize}
Then, there exist  continuous functions $\vartheta_i:C\to[0,1]$ such that
$GNG(\theta_i,K_i)=NG(\vartheta_i,C_i)$.
\end{theorem}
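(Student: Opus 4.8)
The plan is to reduce the generalized Nash game $GNG(\theta_i,K_i)$ to a classical Nash game by choosing, for each player $i$, a new payoff function $\vartheta_i$ whose argmax over the fixed set $C_i$ recovers exactly the graph of the original argmax correspondence $M_i:C_{-i}\tos C_i$. Recall that $GNG(\theta_i,K_i)=\bigcap_{i=1}^p\gra(M_i)$, so it suffices to produce continuous $\vartheta_i:C\to[0,1]$ with
\[
\gra(M_i)=\Bigl\{x\in C:\vartheta_i(x)=\max_{z_i\in C_i}\vartheta_i(z_i,x_{-i})\Bigr\},
\]
since then $NG(\vartheta_i,C_i)=\bigcap_{i=1}^p\gra(M_i)=GNG(\theta_i,K_i)$ by the characterization of Nash equilibria given in Section~\ref{Applications}. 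Thus the whole problem is an application of the inverse maximum machinery to each $M_i$ separately, with $X=C_{-i}$ and $Y=C_i$.

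First I would verify the hypotheses needed to invoke Proposition~\ref{t7} (or Theorem~\ref{t5}) for each $M_i$. Under (i) and (ii), the correspondence $K_i$ is continuous with non-empty compact values (compactness of the values follows since $K_i$ has closed values contained in the compact set $C_i$), and $\theta_i$ is continuous on $C=C_i\times C_{-i}$, hence on $\gra(K_i)$; so Berge's maximum theorem (Theorem~\ref{t1}) applies and yields that $M_i$ is upper semicontinuous with non-empty compact values and that $m_i(x_{-i})=\max_{z_i\in K_i(x_{-i})}\theta_i(z_i,x_{-i})$ is continuous. Since $M_i$ is upper semicontinuous with compact values it is closed, and moreover
\[
\gra(M_i)=\bigcap_{n=1}^{\infty}\bigl\{x\in C:\theta_i(x)>(1-1/n)\,m_i(x_{-i})\bigr\},
\]
an intersection of open subsets of $C$, so $\gra(M_i)$ is a $G_\delta$ set. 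Next I note that $C=\prod_j C_j$ is a product of compact subsets of locally convex (hence completely regular) spaces, so $C$ is compact Hausdorff and therefore normal; thus $C_{-i}\times C_i$ is normal. All hypotheses of Proposition~\ref{t7} are met with the ambient space $C_{-i}\times C_i\cong C$, giving a continuous $\vartheta_i:C\to[0,1]$ with $\gra(M_i)=\{x\in C:\vartheta_i(x)=\sup_{z_i\in C_i}\vartheta_i(z_i,x_{-i})\}$.

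Finally I would assemble the pieces: with the $\vartheta_i$ just constructed, the classical Nash game with strategy sets $C_i$ and payoffs $\vartheta_i$ has equilibrium set
\[
NG(\vartheta_i,C_i)=\bigcap_{i=1}^p\Bigl\{x\in C:\vartheta_i(x)=\max_{z_i\in C_i}\vartheta_i(z_i,x_{-i})\Bigr\}=\bigcap_{i=1}^p\gra(M_i)=GNG(\theta_i,K_i),
\]
which is the desired conclusion. The main obstacle — and the only genuinely nontrivial point beyond bookkeeping — is confirming that $\gra(M_i)$ is simultaneously closed and $G_\delta$ in the right ambient space; the $G_\delta$ property comes from writing $\gra(M_i)$ via the strict inequalities $\theta_i>(1-1/n)m_i$ using the continuity of $m_i$ from Berge's theorem, and closedness from upper semicontinuity plus compact values. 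One should also take a moment to note that since the domain of $M_i$ is $C_{-i}$ but we want $\vartheta_i$ defined on all of $C$, applying Proposition~\ref{t7} with product space $C_{-i}\times C_i$ (rather than $\gra(K_i)$ as in Theorem~\ref{t5}) is exactly what allows the $\vartheta_i$ to be globally defined and continuous on $C$, which is what the statement of a classical Nash game requires.
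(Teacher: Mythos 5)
Your overall strategy coincides with the paper's: apply Berge's theorem to each argmax correspondence $M_i$, invoke Proposition~\ref{t7} on $C_{-i}\times C_i\cong C$ to produce $\vartheta_i$, and conclude via $GNG(\theta_i,K_i)=\bigcap_i\gra(M_i)=NG(\vartheta_i,C_i)$. The normality and closedness checks are fine. The problem is the step where you certify that $\gra(M_i)$ is a $G_\delta$ set: the identity
\[
\gra(M_i)=\bigcap_{n=1}^{\infty}\bigl\{x\in C:\theta_i(x)>(1-1/n)\,m_i(x_{-i})\bigr\}
\]
is false in general, for two reasons. First, the right-hand side forgets the feasibility constraint $x_i\in K_i(x_{-i})$: since $m_i(x_{-i})$ is the supremum of $\theta_i(\cdot,x_{-i})$ only over $K_i(x_{-i})$, a point with $x_i\notin K_i(x_{-i})$ and $\theta_i(x)\ge m_i(x_{-i})$ belongs to every set on the right but not to $\gra(M_i)$. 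Second, the multiplicative approximation $(1-1/n)m_i$ only makes sense when $m_i>0$; here $\theta_i$ is an arbitrary continuous real-valued function, so if $m_i(x_{-i})\le 0$ a genuine maximizer, which satisfies $\theta_i(x)=m_i(x_{-i})\le 0$, already fails the $n=1$ inequality $\theta_i(x)>0$. The additive form $\theta_i(x)>m_i(x_{-i})-1/n$ repairs the second defect, but the first forces you to write $\gra(M_i)=\gra(K_i)\cap\bigcap_n\{x:\theta_i(x)>m_i(x_{-i})-1/n\}$, and you are then left needing $\gra(K_i)$ itself to be $G_\delta$.

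That requirement does not follow from the stated hypotheses: a closed subset of a compact Hausdorff space need not be $G_\delta$. For instance, with one trivial player, $C_i=[0,1]^{[0,1]}$, $K_i$ the constant correspondence equal to a singleton $\{f_0\}$ (closed but not $G_\delta$ in $C_i$) and $\theta_i\equiv 0$, one gets $\gra(M_i)=\gra(K_i)$ not $G_\delta$; since for any continuous $\vartheta_i$ the set $\{x:\vartheta_i(x)=\max_{z_i\in C_i}\vartheta_i(z_i,x_{-i})\}$ is automatically $G_\delta$ (Berge makes the value function continuous), no $\vartheta_i$ as in the conclusion can exist in that example. So the gap is genuine rather than cosmetic, and closes only under an extra hypothesis such as metrizability of the $C_i$ or $\gra(K_i)$ being $G_\delta$. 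To be fair, the paper's own proof applies Proposition~\ref{t7} without verifying its $G_\delta$ hypothesis at all, so you have at least made the missing step explicit --- but the justification you give for it does not work.
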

\begin{proof}
For each $i\in\{1,2,\dots,p\}$, by the Berge maximum theorem, Theorem \ref{t1}, there exists an upper semicontinuous correspondence with compact, convex, and non-empty values, $M_i:C_{-i}\tos C_i$, such that
\[
M_i(x_{-i})=\left\lbrace x_i\in K_i(x_{-i}):~\theta_i(x_i,x_{-i})=\max_{z_i\in K_i(x_{-i})}\theta_i(z_i,x_{-i})\right\rbrace.
\]
Now, by Proposition \ref{t7}, there exists a continuous function $\vartheta_i:C\to[0,1]$ such that
\[
M_i(x_{-i})=\left\lbrace x_i\in C_i:~\vartheta_i(x_i,x_{-i})=\max_{z_i\in C_i}\vartheta_i(z_i,x_{-i})\right\rbrace.
\]
Thus, $\hat{x}\in GNG(\theta_i,K_i)$ if, and only if, $\hat{x}\in NG(\vartheta_i,C_i)$.
\end{proof}

Another kind of reformulation is given by considering extended-real valued functions, that means associated to each player $i$, we define the function $\varphi_i:C\to\rr\cup\{-\infty\}$ by
\[
\varphi_i(x)=\theta_i(x)-\delta_{\gra(K_i)}(x),
\]
where $\delta_{\gra(K_i)}$ is the indicator function associated to the graph of $K_i$, that is $\delta_{\gra(K_i)}(x)=0$ if, $x\in \gra(K_i)$, and $\delta_{\gra(K_i)}=\infty$ if $x\notin \gra(K_i)$. We affirm that $ GNG(\theta_i,K_i)=NG(\varphi_i,C_i)$. Indeed, let $\hat{x}$ be an element of
$GNG(\theta_i,K_i)$, that is  for every $i$, $\hat{x}_i\in K_i(\hat{x}_{-i})$ and
\[
\theta_i(\hat{x})\geq \theta_i(x_i,\hat{x}_{-i}),\mbox{ for all }x_i\in K_i(\hat{x}_{-i}).
\]
Consequently, $\varphi_i(\hat{x})=\theta_i(\hat{x})$ and for any $x_i\notin K_i(\hat{x}_{-i})$ one has $\varphi_i(x_i,\hat{x}_{-i})=-\infty$. Thus, $\hat{x}\in NG(\varphi_i,C_i)$.
Reciprocally, let $\hat{x}\in NG(\varphi_i,C_i)$. Since $K_i$ is non-empty valued, we deduce that $\hat{x}_i\in K_i(\hat{x}_{-i})$. The result follows from the fact that  for any $x_i\in K_i(\hat{x}_{-i})$ we have $\varphi_i(x_i,\hat{x}_{-i})=\theta_i(x_i,\hat{x}_{-i})$.
However, this kind of reformulation does not guarantee the continuity of each function $\varphi_i$.

On the other hand, it is clear that Theorem \ref{A-D} implies Theorem \ref{D}. However,  they are actually equivalent. This is stated below.
\begin{theorem}\label{A-D-D}
Theorem \ref{D} implies Theorem \ref{A-D}.
\end{theorem}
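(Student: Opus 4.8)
The plan is to convert an arbitrary generalized Nash game satisfying the hypotheses of Theorem~\ref{A-D} into a \emph{classical} Nash game played over the same strategy sets $C_i$, in such a way that the two games have exactly the same set of equilibria, and then to apply Theorem~\ref{D} to the auxiliary classical game. The conclusion of Theorem~\ref{A-D} then drops out immediately.

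First I would check that the data of Theorem~\ref{A-D} meet the requirements of Theorem~\ref{GNEP-NEP}: each $C_i$ is compact and non-empty, each $\theta_i$ is continuous (hypothesis (i)), and each $K_i$ is continuous with closed and non-empty values (hypothesis (ii)). Hence, by Berge's theorem (Theorem~\ref{t1}) each best-reply correspondence $M_i:C_{-i}\tos C_i$ of the \emph{original} generalized game is upper semicontinuous with compact non-empty values, and by Proposition~\ref{t7} — exactly as in the proof of Theorem~\ref{GNEP-NEP} — there are continuous functions $\vartheta_i:C\to[0,1]$ whose unconstrained best-reply correspondence over all of $C_i$,
\[
x_{-i}\longmapsto\Big\{x_i\in C_i:\vartheta_i(x_i,x_{-i})=\max_{z_i\in C_i}\vartheta_i(z_i,x_{-i})\Big\},
\]
coincides with $M_i$, and which therefore satisfies $GNG(\theta_i,K_i)=NG(\vartheta_i,C_i)$.

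Next I would verify that the classical Nash game with strategy sets $C_i$ and payoff functions $\vartheta_i$ fulfils every hypothesis of Theorem~\ref{D}. The sets $C_i$ are compact, convex and non-empty subsets of the locally convex spaces $E_i$; the functions $\vartheta_i$ are continuous (being $[0,1]$-valued, they are legitimate real payoffs); and the maximizer correspondence attached to $\vartheta_i$ is precisely $M_i$, which is convex-valued by hypothesis (iii) of Theorem~\ref{A-D}. Thus Theorem~\ref{D} yields $NG(\vartheta_i,C_i)\neq\emptyset$, and combining this with the identity $GNG(\theta_i,K_i)=NG(\vartheta_i,C_i)$ gives $GNG(\theta_i,K_i)\neq\emptyset$, which is the assertion of Theorem~\ref{A-D}.

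The only genuinely delicate point is ensuring, in the second step, that the auxiliary payoffs $\vartheta_i$ delivered by the inverse maximum theorem can be chosen so that their \emph{unconstrained} best replies over $C_i$ still equal $M_i$; this single feature does double duty, since it is what simultaneously preserves the equilibrium set and transports the convexity condition (iii) from the generalized game to the auxiliary classical game (so that Theorem~\ref{D} becomes applicable). Everything else is bookkeeping: matching the compactness, convexity and continuity hypotheses of Theorems~\ref{D} and~\ref{GNEP-NEP} against those of Theorem~\ref{A-D}, together with the elementary fact (already recorded before the statement) that $GNG(\theta_i,K_i)=\bigcap_{i=1}^p\gra(M_i)$ and the analogous description of $NG(\vartheta_i,C_i)$.
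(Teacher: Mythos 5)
Your proposal is correct and follows exactly the route the paper takes: its proof of this theorem is the single sentence that it is a consequence of Theorem~\ref{GNEP-NEP} and Theorem~\ref{D}. Your write-up usefully makes explicit the one point the paper leaves implicit, namely that the maximizer correspondence of the auxiliary payoffs $\vartheta_i$ over all of $C_i$ is precisely $M_i$, so hypothesis (iii) of Theorem~\ref{A-D} is what makes Theorem~\ref{D} applicable to the reformulated game.
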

\begin{proof}
This is  a consequence of Theorem \ref{GNEP-NEP} and
Theorem \ref{D}.
\end{proof}

\begin{remark}
Considering the original version of Theorems \ref{D} and \ref{A-D}, we can apply Theorem 3.5 in \cite{LiLiFeng2022} to show the previous result.
\end{remark}

\section{Application to fixed point theory}\label{FPT}
The authors in  \cite{YuEtAl16} showed that  Kakutani's fixed point theorem, Theorem \ref{KFFPT}, is consequence of Theorem \ref{D} on finite dimensional spaces. Moreover,
the Kakutani fixed point theorem admits an extension to locally convex spaces, by means of the Kakutani-Fan-Glicksberg theorem (see \cite{Fa52,Gl52}), which we state below.
\begin{theorem}\label{KFFPT}
 Let $C$ be a non-empty convex and compact subset of a Hausdorff locally convex topological vector space $Y$  and let $T:C\tos C$ be a correspondence. If $T$ is upper semicontinuous with convex, closed, and non-empty values, then the set $\{x\in C:~x\in T(x)\}$ is non-empty.
\end{theorem}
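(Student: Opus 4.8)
The plan is to derive Theorem~\ref{KFFPT} from the Debreu--Fan--Glicksberg theorem, Theorem~\ref{D} — equivalently, by Theorem~\ref{A-D-D}, from the Arrow--Debreu theorem, Theorem~\ref{A-D} — by encoding the fixed-point condition $x\in T(x)$ as the equilibrium set of a two-player generalized Nash game. The guiding idea is to split the ``self-reference'': one player is forced, through its constraint correspondence, to copy the other player's strategy, while the other player's best-response correspondence is arranged to coincide with $T$.

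First I would convert $T$ into a continuous payoff. Since $C$ is compact Hausdorff it is paracompact, it is a convex compact subset of a locally convex space, and $T:C\tos C$ is non-empty, convex, upper semicontinuous and compact-valued (its closed values sit inside the compact set $C$); hence the closing proposition of Section~\ref{Results} gives that $\gra(T)$ is a $G_\delta$ set. Moreover $C\times C$ is normal and $T$ is upper semicontinuous with compact values, so Proposition~\ref{t7} (applied with $X=Y=C$ and $M=T$) produces a continuous function $\phi:C\times C\to[0,1]$ with
\[
T(x)=\{\, y\in C:\ \phi(x,y)=\sup_{z\in C}\phi(x,z)\,\}\qquad\mbox{for all } x\in C.
\]

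Next I would set up the game: take $p=2$ players with $C_1=C_2=C$ (inside the same locally convex space) and define
\[
\theta_1(x_1,x_2)=\phi(x_2,x_1),\qquad \theta_2\equiv 0,\qquad K_1(x_2)=C,\qquad K_2(x_1)=\{x_1\}.
\]
Then $\theta_1,\theta_2$ are continuous; $K_1$ is a constant, hence continuous, correspondence with convex, closed, non-empty values; and $x_1\mapsto\{x_1\}$ is continuous with convex, closed, non-empty values. The best-response correspondences are $M_1(x_2)=\{x_1\in C:\phi(x_2,x_1)=\sup_{z}\phi(x_2,z)\}=T(x_2)$, which is convex-valued by hypothesis on $T$, and $M_2(x_1)=\{x_1\}$, which is trivially convex-valued. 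Thus all hypotheses of Theorem~\ref{A-D} are met, so $GNG(\theta_i,K_i)\neq\emptyset$. For any $\hat{x}=(\hat{x}_1,\hat{x}_2)\in GNG(\theta_i,K_i)$ one has $\hat{x}_2\in M_2(\hat{x}_1)=\{\hat{x}_1\}$, so $\hat{x}_2=\hat{x}_1$, and $\hat{x}_1\in M_1(\hat{x}_2)=T(\hat{x}_2)=T(\hat{x}_1)$; hence $\hat{x}_1$ is a fixed point of $T$, and $\{x\in C:x\in T(x)\}\neq\emptyset$. By Theorem~\ref{A-D-D} this exhibits the Kakutani--Fan--Glicksberg theorem as a consequence of Theorem~\ref{D}, now on locally convex spaces, which is the promised extension of \cite{YuEtAl16}. (The same game, fed to Theorem~\ref{GNEP-NEP} and then to Theorem~\ref{D}, gives an alternative route.)

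I expect the delicate point to be the first step rather than the game-theoretic bookkeeping. In the finite-dimensional argument of \cite{YuEtAl16} the fixed-point condition is encoded by building a payoff whose argmax correspondence is a diagonal-type map, which requires knowing that the diagonal of $C\times C$ is $G_\delta$ — equivalently, that $C$ is metrizable — and this fails for general compact convex subsets of locally convex spaces. The way around it is exactly the division of labour above: the ``diagonalization'' is carried out by the constraint $K_2(x_1)=\{x_1\}$, which is a genuinely continuous correspondence needing no metrizability, and the only place where a $G_\delta$ hypothesis enters is the representation of the merely upper semicontinuous map $T$ as an argmax correspondence — where the closing results of Section~\ref{Results} supply the $G_\delta$ property of $\gra(T)$ for free.
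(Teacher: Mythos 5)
Your game-theoretic encoding is correct and genuinely different from the paper's argument. The paper (Proposition \ref{D-KFFPT}) argues by contradiction: assuming $T$ has no fixed point, it separates $\gra(T)$ from the diagonal by a continuous function quasi-concave in its second variable (Proposition 3.1 of \cite{Ya08}), applies Theorem \ref{D} to a family of two-player games indexed by finite subsets of a separating family of seminorms, and uses compactness (the finite intersection property) to produce a diagonal point maximizing the separating function --- a contradiction. Your proof is direct: you represent $T$ as an argmax correspondence (the $G_\delta$ property of $\gra(T)$ from the closing proposition of Section \ref{Results}, then Proposition \ref{t7}), and you realize the diagonal through the constraint $K_2(x_1)=\{x_1\}$ rather than through a payoff. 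The verification of the hypotheses of Theorem \ref{A-D} is correct; the only slip is cosmetic: with $\theta_2\equiv 0$ the set $M_2(x_1)$ of condition (iii) is all of $C$, not $\{x_1\}$, but the equilibrium condition $\hat{x}_2\in K_2(\hat{x}_1)$ still forces $\hat{x}_2=\hat{x}_1$. As a derivation of Theorem \ref{KFFPT} from Theorem \ref{A-D} taken as a black box, the argument stands.

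The gap lies in the final step, where you promote this to a consequence of Theorem \ref{D}. The paper's only infinite-dimensional justification of Theorem \ref{A-D} is Theorem \ref{A-D-D}, whose proof runs through Theorem \ref{GNEP-NEP}, i.e., through Proposition \ref{t7} applied to each best-response map $M_i$. For your game, $\gra(M_2)$ (equivalently $\gra(K_2)$) is the diagonal of $C\times C$; if some continuous $\vartheta_2$ had $\{x_1\}$ as its argmax over $C$ for every $x_1$, the diagonal would be the zero set of the continuous function $m_2(x_1)-\vartheta_2(x_1,x_2)$, hence a $G_\delta$ set, which forces $C$ to be metrizable. So the reduction of your generalized game to a classical one is impossible precisely in the non-metrizable case, your parenthetical alternative route via Theorem \ref{GNEP-NEP} fails outright, and the chain Theorem \ref{D} $\Rightarrow$ Theorem \ref{A-D} $\Rightarrow$ Theorem \ref{KFFPT} is not closed by the paper's own machinery; appealing instead to an independent proof of Theorem \ref{A-D} would risk circularity, since the classical proofs of that theorem rest on Kakutani--Fan--Glicksberg. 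This is exactly the obstruction you diagnose in your last paragraph: the constraint $K_2$ hides the diagonal inside the game, but it reappears the moment the game must be de-constrained. The paper's seminorm and finite-intersection device is what actually dissolves it.
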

We will show that the implication given in  \cite{YuEtAl16}  is also true on locally convex spaces.

\begin{proposition}\label{D-KFFPT}
Theorem \ref{D} implies Theorem \ref{KFFPT}.
\end{proposition}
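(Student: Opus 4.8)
The plan is to reduce, by approximation, to a finite-dimensional situation where a Nash equilibrium can be built by the familiar ``copying'' trick and Theorem \ref{D} applies directly. The point is that the correspondence $T$ in Theorem \ref{KFFPT} is only upper semicontinuous and $C$ need not be metrizable, so one cannot realize $T$ itself as an argmax correspondence via an inverse maximum theorem --- that would force $\gra(T)$ to be $G_\delta$, which may fail on a non-metrizable $C$. So, fixing an open convex symmetric neighborhood $V$ of the origin in $Y$, I would use the upper semicontinuity of $T$ and the compactness of $C$ to pick finitely many points $a_1,\dots,a_n\in C$, open sets $O_j\ni a_j$ covering $C$ with $O_j\subseteq a_j+V$, points $b_j\in T(a_j)$, and a partition of unity $\{p_j\}_{j=1}^n$ subordinate to $\{O_j\}$ (available since $C$ is compact Hausdorff). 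Setting $C_V=\co\{b_1,\dots,b_n\}$ --- a non-empty compact convex subset of $C$ lying in the finite-dimensional subspace of $Y$ spanned by $b_1,\dots,b_n$ --- and $\phi_V(x)=\sum_{j=1}^n p_j(x)\,b_j$, I obtain a continuous map $\phi_V:C\to C_V$ that in particular maps $C_V$ into itself.

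Next I would produce a fixed point of $\phi_V$ restricted to $C_V$ from Theorem \ref{D}. Since $C_V$ is finite-dimensional, hence metrizable, consider the two-player game with $C_1=C_2=C_V$ and the continuous payoffs $\vartheta_1(x_1,x_2)=-\|x_1-\phi_V(x_2)\|^2$ and $\vartheta_2(x_2,x_1)=-\|x_2-x_1\|^2$, where $\|\cdot\|$ is any norm on the finite-dimensional subspace of $Y$ spanned by $b_1,\dots,b_n$; alternatively, such $\vartheta_i$ can be extracted from Proposition \ref{t7} applied to the closed, $G_\delta$, compact-valued graphs of $x_2\mapsto\{\phi_V(x_2)\}$ and $x_1\mapsto\{x_1\}$. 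The best-response correspondences are the singleton-valued maps $M_1(x_2)=\{\phi_V(x_2)\}$ and $M_2(x_1)=\{x_1\}$, which are convex-valued, so Theorem \ref{D} yields a Nash equilibrium $(\hat x_1,\hat x_2)$; then $\hat x_2=\hat x_1$ and $\hat x_1=\phi_V(\hat x_1)$. Write $z_V:=\hat x_1$, so that $z_V=\phi_V(z_V)$.

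Finally, I would let $V$ run over a basis of open convex symmetric neighborhoods of $0$, directed by reverse inclusion, and pass to a subnet along which $z_V\to\hat x\in C$ (using compactness of $C$). For each $V$, the indices $j$ with $p_j(z_V)>0$ satisfy $z_V\in O_j\subseteq a_j+V$, hence $a_j\in z_V+V$ and $b_j\in T(z_V+V)$, while $z_V=\sum_{j}p_j(z_V)\,b_j$ is a convex combination of such $b_j$. Given an open convex set $W\supseteq T(\hat x)$, upper semicontinuity gives an open $\Omega\ni\hat x$ with $T(\Omega)\subseteq W$; since $z_V\to\hat x$ and the neighborhoods $V$ eventually lie inside any prescribed one, eventually $z_V+V\subseteq\Omega$, so every active $b_j$ lies in $W$ and therefore $z_V\in W$; applying this first to a slightly smaller convex open neighborhood of $T(\hat x)$ yields $\hat x\in W$. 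Since the compact convex set $T(\hat x)$ in the locally convex space $Y$ equals the intersection of its open convex neighborhoods (Hahn--Banach separation), this forces $\hat x\in T(\hat x)$, which proves Theorem \ref{KFFPT}.

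The main obstacle is exactly the mismatch flagged at the outset: the inverse maximum machinery cannot be fed the merely upper semicontinuous $T$ on a possibly non-metrizable $C$, so the substantive work is the approximation step --- manufacturing the continuous $\phi_V$ and the finite-dimensional carrier $C_V$ on which the two-player ``copying'' game makes sense --- together with the upper-semicontinuity/compactness limiting argument; the Nash-theoretic input is then only Theorem \ref{D}, so this also recovers, and extends to locally convex spaces, the implication proved by Yu \emph{et al.} \cite{YuEtAl16}.
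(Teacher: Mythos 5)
Your proof is correct, but it follows a genuinely different route from the paper's. The paper argues by contradiction: assuming $T$ has no fixed point, it separates $\gra(T)$ from the diagonal by a continuous $\theta:C\times C\to\rr$ with $\theta\equiv 1$ on $\gra(T)$, $\theta\equiv 0$ on the diagonal, and $\theta(x,\cdot)$ quasi-concave (Yamauchi's Proposition 3.1); it then applies Theorem \ref{D} on $C\times C$ itself to the two-player game with payoffs $-\sum_{i}\rho_i(x-y)$ and $\theta(x,y)$ for each finite set of seminorms, and uses the finite intersection property of the compact sets $F_\rho$ to produce a diagonal point $\hat x=\hat y$ at which $\hat y$ maximizes $\theta(\hat x,\cdot)$, a contradiction. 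You instead run the classical finite-dimensional approximation scheme (partition of unity, the map $\phi_V$ into $C_V=\co\{b_1,\dots,b_n\}$), replace Brouwer's theorem by the two-player ``copying'' game on $C_V$ to which Theorem \ref{D} applies, and recover a fixed point of $T$ by a net limit together with Hahn--Banach separation of the compact convex set $T(\hat x)$ from an outside point. Each step of yours checks out (the active indices satisfy $b_j\in T(z_V+V)$, the convexity of $W$ absorbs the convex combination, and the shrinking-neighborhood trick legitimately upgrades $\hat x\in\overline{W}$ to $\hat x\in W$). What the two approaches buy: yours is direct rather than by contradiction, dispenses with the inverse maximum machinery entirely, and invokes Theorem \ref{D} only on finite-dimensional $C_V$, so it actually shows that the finite-dimensional case of Theorem \ref{D} already implies Theorem \ref{KFFPT}; the paper's proof is shorter once Yamauchi's separation result is available and never leaves the ambient set $C$. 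Two minor remarks: your opening claim that $T$ cannot be realized as an argmax correspondence because $\gra(T)$ might fail to be $G_\delta$ is at odds with the paper itself (its final proposition in Section \ref{Results} shows such graphs are automatically $G_\delta$, and Proposition \ref{mmc->KFF} uses exactly such a realization via Theorem 3.5 of \cite{LiLiFeng2022}), though this only affects your motivation, not your argument; and upper semicontinuity of $T$ is not actually needed to build $\phi_V$ (compactness suffices there), being used only, and correctly, in the limiting step.
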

\begin{proof}
Assume that $T$ does not have a fixed point. Then the diagonal of $C\times C$, $D=\{(x,x)\in Y \times Y: x\in C\}$, is closed and disjoint to the graph of $T$. Thus, $\gra(T)\subset D^c$ and by Proposition 3.1 in \cite{Ya08}, there exists a continuous function $\theta:C\times C\to\rr$ such that $\theta(\gra(T))={1}$ and $\theta(D)=\{0\}$. Moreover, $\theta$ is quasi-concave in its second argument.
Let $P$ be a separating family of seminorms that generates the topology of $E$. For each $\rho\in P$, we consider the set
\[
F_\rho=\{(x,y)\in C\times C:~\theta(x,y)\geq \theta(x,z)\mbox{ and }\rho(x-y)\leq \rho(w-y),\mbox{ for all }w,z\in C\}.
\]
Clearly $F_\rho$ is compact. Furthermore, the family $\{F_\rho\}_{\rho}$ has the finite intersection property. Indeed, consider $\rho_1,~\dots,\rho_n\in P$ and consider the game with two players and their payoff functions $\theta_1,\theta_2:C\times C\to\rr$ defined by
\[
\theta_1(x,y)=-\sum_{i=1}^n\rho_i(x-y)\mbox{ and }\theta_2(x,y)=\theta(x,y).
\]
Since each function $\theta_j$ is continuous and quasi-concave in $x_j$, from Theorem \ref{D}, we deduce the existence of a Nash equilibrium $(\hat{x},\hat{y})$. Thi means
\[
\theta(\hat{x},\hat{y})\geq \theta(\hat{x},y)\mbox{ for all }y\in C,
\]
and
\[
\sum_{i=1}^n\rho_i(\hat{x}-\hat{y})\leq \sum_{i=1}^n\rho_i(x-\hat{y}),\mbox{ for all }x\in X.
\]
In this last inequality for $x=\hat{y}$ we obtain that $\rho_i(\hat{x}-\hat{y})=0$ for all $i\in\{1,2,\dots,n\}$. Consequently, $(\hat{x},\hat{y})\in \bigcap_{i=1}^n F_{\rho_i}$. Hence, there exists  $(\hat{x},\hat{y})\in\bigcap_{\rho\in P}F_\rho$ and this implies
$\rho(\hat{x}-\hat{y})=0$, for all $\rho\in P$. Thus $\hat{x}=\hat{y}$. Since $\hat{y}$ maximizes the function $\theta(\hat{x},\cdot)$, $\theta(\gra(T))=\{1\}$  and $\theta(D)=\{0\}$  we get a contradiction.
\end{proof}

Finally, inspired by Komiya \cite{Ko97}, we prove that the famous minimax inequality, due to Ky Fan \cite{Fa72}, implies Kakutani-Fan-Glicksberg's theorem

\begin{theorem}[Fan, 1972]\label{minmax}
Let $X$ be a compact  convex subset of a topological vector space. Let $f$ be a real-valued function defined on $X\times X$ such that
\begin{enumerate}
\item[(i)] for each $y\in X$, $f(\cdot,y)$ is lower semicontinuous; and
\item[(ii)] for each $x\in X$, $f(x,\cdot)$ is quasi-concave.
\end{enumerate}
Then, the minimax inequality
\[
\min_{x\in X}\max_{y\in X}f(x,y)\leq \max_{x\in X} f(x,x)
\]
holds.
\end{theorem}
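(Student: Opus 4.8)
The plan is to prove the inequality through the Knaster--Kuratowski--Mazurkiewicz (KKM) principle in its Ky Fan form. First I would set $t=\sup_{x\in X}f(x,x)$. Since $x\mapsto f(x,y)$ is lower semicontinuous for each fixed $y$, the supremum $g(x)=\sup_{y\in X}f(x,y)$ is lower semicontinuous on the compact set $X$, so $\min_{x\in X}g(x)$ is attained; this already legitimizes the ``$\min$'' on the left-hand side. It therefore suffices to exhibit a single $x_0\in X$ with $f(x_0,y)\le t$ for every $y\in X$, for then $\min_{x}\max_{y}f(x,y)\le g(x_0)\le t$.

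To produce such an $x_0$, for each $y\in X$ I would define the sublevel set
\[
F(y)=\{x\in X:~f(x,y)\le t\}.
\]
By the lower semicontinuity of $f(\cdot,y)$ each $F(y)$ is closed, hence compact as a closed subset of the compact space $X$. The heart of the argument is to verify that $\{F(y)\}_{y\in X}$ is a KKM family, that is, $\co\{y_1,\dots,y_n\}\subseteq\bigcup_{i=1}^{n}F(y_i)$ for every finite collection $y_1,\dots,y_n\in X$. Here the quasi-concavity of $f(x,\cdot)$ is exactly what is needed: if some convex combination $x=\sum_i\lambda_i y_i$ avoided all of the $F(y_i)$, then $f(x,y_i)>t$ for every $i$, and the inequality $f\!\left(x,\sum_i\lambda_i y_i\right)\ge\min_i f(x,y_i)$ characterizing quasi-concavity would force $f(x,x)>t$, contradicting the definition of $t$.

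Once the KKM property is in hand, the compactness of the closed sets $F(y)$ together with the Ky Fan geometric (infinite-dimensional KKM) lemma yields $\bigcap_{y\in X}F(y)\neq\emptyset$; any point $x_0$ of this intersection satisfies $f(x_0,y)\le t$ for all $y\in X$, which closes the argument.

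I expect the KKM step to be the main obstacle. Verifying the covering inclusion requires passing from the two-point quasi-concavity inequality recorded in the preliminaries to arbitrary finite convex combinations, which is a short induction, and the nonemptiness of $\bigcap_{y}F(y)$ is not elementary finite-intersection reasoning but genuinely needs the Ky Fan extension of the KKM lemma, since $X$ sits in an arbitrary topological vector space. A secondary point worth flagging is attainment on the right: the inner $\max_y f(x_0,y)$ is only guaranteed as a supremum unless $f(x_0,\cdot)$ is additionally upper semicontinuous, so the right-hand ``$\max$'' is to be read as $t=\sup_x f(x,x)$, which is all the inequality actually requires.
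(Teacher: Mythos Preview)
The paper does not prove this theorem at all; it is quoted as a classical result due to Ky~Fan \cite{Fa72} and then invoked as a black box in Proposition~\ref{mmc->KFF}. There is therefore no ``paper's own proof'' to compare against. Your KKM-based argument is the standard route (essentially Fan's original one) and is correct: the sublevel sets $F(y)$ are closed, the quasi-concavity of $f(x,\cdot)$ yields the KKM covering condition exactly as you describe, and Fan's infinite-dimensional KKM lemma then produces a common point $x_0$. Your caveats about reading the right-hand ``$\max$'' as a supremum and about extending two-point quasi-concavity to finite convex combinations by induction are appropriate and do not affect the validity of the proof.
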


\begin{proposition}\label{mmc->KFF}
Theorem \ref{KFFPT} is a consequence of Theorem \ref{minmax}.
\end{proposition}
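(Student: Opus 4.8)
The plan is to derive the existence of a fixed point for $T:C\tos C$ (upper semicontinuous, with convex, closed, non-empty values, $C$ compact convex in a locally convex space) from the Ky Fan minimax inequality, following Komiya's argument. First I would reduce to separating $T$ from the diagonal: assume for contradiction that $T$ has no fixed point, so $x\notin T(x)$ for every $x\in C$. By upper semicontinuity and compactness the graph $\gra(T)$ is closed, and it is disjoint from the closed diagonal $D=\{(x,x):x\in C\}$. Since $C\times C$ is normal (it is compact Hausdorff) and the convex values $T(x)$ can be separated from $x$ by a continuous linear functional, one obtains — as in Proposition 3.1 of \cite{Ya08}, invoked in the proof of Proposition \ref{D-KFFPT} — a continuous function $\theta:C\times C\to\rr$ with $\theta\equiv 1$ on $\gra(T)$, $\theta\equiv 0$ on $D$, and $\theta(x,\cdot)$ quasi-concave for each $x$.

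Next I would feed $f=\theta$ into Theorem \ref{minmax}. Hypothesis (i) holds since $\theta$ is continuous (hence lower semicontinuous in the first variable), and hypothesis (ii) is exactly the quasi-concavity of $\theta(x,\cdot)$. The minimax inequality then yields
\[
\min_{x\in C}\max_{y\in C}\theta(x,y)\le\max_{x\in C}\theta(x,x)=0,
\]
the last equality because $\theta\equiv 0$ on $D$. Hence there is $\bar{x}\in C$ with $\theta(\bar{x},y)\le 0$ for all $y\in C$. But for any $y\in T(\bar{x})$ — and $T(\bar{x})\neq\emptyset$ — we have $(\bar{x},y)\in\gra(T)$, so $\theta(\bar{x},y)=1>0$, a contradiction. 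Therefore $T$ has a fixed point.

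The main obstacle — and the only genuinely nontrivial point — is the construction of the separating function $\theta$ with the quasi-concavity property in the second variable. In finite dimensions one can take $\theta(x,y)$ to be a scaled distance-type or affine-in-$y$ expression built from the separating hyperplanes; in the locally convex setting this is precisely what the cited result from \cite{Ya08} (already used in Proposition \ref{D-KFFPT}) provides, so I would simply invoke it rather than reprove it. One should be slightly careful that $\theta$ is required merely quasi-concave, not concave, in $y$, which is all Theorem \ref{minmax} needs; and that the compactness of $C$ guarantees the $\max$ and $\min$ in the minimax inequality are attained, so the contradiction is clean. Everything else is routine.
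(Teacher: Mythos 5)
Your proof is correct, but it takes a different route from the paper's. The paper argues directly: it invokes the inverse maximum theorem of Li \emph{et al.} (Theorem 3.5 in \cite{LiLiFeng2022}) to produce a continuous $\theta$, quasi-concave in its second variable, with $T(x)=\{y:\theta(x,y)=\max_z\theta(x,z)\}$, then applies Theorem \ref{minmax} to $f(x,y)=\theta(x,y)-\theta(x,x)$ and reads off a point $x_0$ maximizing $\theta(x_0,\cdot)$, hence $x_0\in T(x_0)$ --- no contradiction needed. You instead assume no fixed point, separate the closed graph from the diagonal via the quasi-concave Urysohn-type function of Proposition 3.1 in \cite{Ya08} (exactly as this paper does in the proof of Proposition \ref{D-KFFPT}), and then let the minimax inequality produce a point $\bar{x}$ with $\theta(\bar{x},\cdot)\le 0$, contradicting $\theta=1$ on $\gra(T)$. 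Your version needs only the separation result rather than the full inverse maximum theorem (a weaker input, since you never need to realize $T$ exactly as an argmax correspondence), at the cost of being indirect; the paper's version buys a constructive identification of the fixed point as a maximizer. Both hinge on the same nontrivial external ingredient --- a continuous function on $C\times C$ that is quasi-concave in the second variable and adapted to $\gra(T)$ --- and both verifications of the hypotheses of Theorem \ref{minmax} are routine, so your argument is a legitimate alternative proof.
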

\begin{proof} Let $T:C\tos C$ be a correspondence satisfying the assumptions of Theorem \ref{KFFPT}.
Thanks to Theorem 3.5 in \cite{LiLiFeng2022},  there exists a continuous function $\theta:C\times C\to[0,1]$ such that $\theta$ is quasi-concave in its second argument and
\[
T(x)=\left\lbrace y\in C:~ \theta(x,y)=\max_{z\in C}\theta(x,z)\right\rbrace, \mbox{ for all }x\in C.
\]
We consider the function $f:C\times C\to\rr$ defined as
\[
f(x,y)=\theta(x,y)-\theta(x,x).
\]
Clearly, $f$ vanishes on the diagonal of $C\times C$ and it satisfies all assumptions of Theorem \ref{minmax}. Thus, there exists $x_0\in C$ such that
$f(x_0,y)\leq 0$, for all $y\in C$. This means, $x_0$ maximizes $f(x_0,\cdot)$.
On the other hand, it is clear that $T(x)=\{y\in C:~ f(x,y)=\max_{z\in C}f(x,z)\}$.
Therefore, we deduce that $x_0\in T(x_0)$. This completes the proof.
\end{proof}

\section{Conclusions}\label{Conclusions}

Motivated by the works of Komita \cite{Ko97}, Park and Komiya \cite{PK01}, Aoyama \cite{Ao03}, Yamauchi \cite{Ya08}
 and Li \emph{et al.} \cite{LiLiFeng2022}, we present some inverse maximum theorems that are independent of those previously mentioned. Some introduced examples give an account of  this independence.
 As applications of our results, we first present an inverse maximum Nash theorem, second, we reformulate generalized Nash games as a classical Nash game under continuity assumption, and third, we prove the equivalence between famous equilibrium theorems and that they are equivalent to the well-known Kakutani-Fan-Glicksberg theorem and the famous minimax inequality due to Ky Fan.




\end{document}